\newtheorem{theorem}{Theorem}[section]
\newtheorem{lemma}[theorem]{Lemma}
\newtheorem{proposition}[theorem]{Proposition}
\theoremstyle{definition}
\newtheorem{definition}[theorem]{Definition}
\theoremstyle{remark}
\numberwithin{equation}{section}
\newcommand{\R}{\mathbb{R}}
\newcommand{\dist}{\text{dist}}
\begin{document}

\title[The sharp estimate of nodal sets in polytopes]{THE SHARP ESTIMATE OF NODAL SETS \\FOR DIRICHLET LAPLACE EIGENFUNCTIONS IN POLYTOPES}

\author{Yingying Cai}
\address{Yingying Cai: Academy of Mathematics and Systems Science, Chinese Academy of Sciences, Beijing, China.}
\email{caiyingying@amss.ac.cn}

\author{Jinping Zhuge}
\address{Jinping Zhuge: Morningside Center of Mathematics, Academy of Mathematics and systems science,
Chinese Academy of Sciences, Beijing, China.}
\email{jpzhuge@amss.ac.cn}

\subjclass[2010]{35A02, 35P05}

\maketitle

\begin{abstract}
Let $P$ be a bounded $n$-dimensional Lipschitz polytope, and let $\varphi_{\lambda}$ be a Dirichlet Laplace eigenfunction in $P$ corresponding to the eigenvalue $\lambda$. We show that the $(n-1)$-dimensional Hausdorff measure of the nodal set of $\varphi_{\lambda}$ does not exceed $C(P)\sqrt{\lambda}$. Our result extends the previous ones in quaisconvex domains (including $C^1$ and convex domains) to general polytopes that are not necessarily quasiconvex.
\end{abstract}

\section{Introduction}

Let $\varphi_\lambda$ be the Laplace eigenfunction corresponding to the eigenvalue $\lambda>0$ on an $n$-dimensional compact smooth Riemannian manifold $(\mathcal{M},g)$. Let $Z(\varphi_\lambda) = \{ x\in \mathcal{M}: \varphi_\lambda(x) = 0 \}$ be the nodal set of $\varphi_\lambda$.
S.T. Yau conjectured that $(n-1)$-dimensional Hausdorff measure of $Z(\varphi_\lambda)$ is comparable to $\sqrt{\lambda}$. The original conjecture is still open, while much progress can be found in, e.g., \cite{DF88,DF90,L18a,L18b,LM18,Z19,LZ22,G22,H23}. We refer to \cite{LM20} for a detailed introduction on the Yau's conjecture.

In this paper, we study the nodal sets of Laplace eigenfunctions in a bounded domain $\Omega \subset \R^n$ satisfying the Dirichlet boundary condition on $\partial \Omega$. We are interested in how the regularity or geometry of the boundary affects the size of nodal sets near the boundary. Precisely, let $(\varphi_\lambda,\lambda)$ be the eigenpair satisfying $-\Delta \varphi_\lambda =\lambda \varphi_\lambda$ in $\Omega$ and $\varphi_\lambda = 0$ on $\partial \Omega$. If $\partial \Omega = \emptyset$ or $\partial \Omega$ is real analytic, the Yau's conjecture was proved by Donnelly-Fefferman \cite{DF88,DF90}. We remark that the sharp lower bound estimate, namely $\mathbb{H}^{n-1}(Z(\varphi_\lambda)) \ge c(\Omega) \sqrt{\lambda}$, actually holds independent of the boundary geometry, due to the interior density of nodal sets. However, the boundary geometry will play a crucial role in the upper bound estimate as the nodal set may concentrate near the irregular boundary.
Recently, Logunov-Malinnikova-Nadirashvili-Nazarov \cite{LMNN21} proved the sharp upper bound for $C^1$ domains (or Lipschitz domains with sufficiently small Lipshcitz constant), namely $\mathbb{H}^{n-1}(Z(\varphi_\lambda)) \le C(\Omega) \sqrt{\lambda}$. Following the similar idea and taking advantage of the properties of convex domains, Zhu-Zhuge \cite{ZZ23} generalized the result to quasiconvex domains, which is a class of Lipschitz domains that includes both 
$C^1$ and convex domains. As far as we know, all the previous results do not cover (nonconvex) polygonal domains with very simple geometry. Note that the upper bound estimate of nodal sets is a global property and the previous results cannot rule out the possibility of concentration of the nodal sets in the vicinity of nonconvex corners or ridges of polygonal domains.

The contribution of this paper is to prove the sharp estimate of the nodal sets for Dirichlet Laplace eigenfunctions in bounded Lipschitz polytopes, which are higher-dimensional generalization of two-dimensional polygons and three-dimensional polyhedras. The precise mathematical characterization of polytopes will be given in Section \ref{sec.Polytopes}. The main result of this paper is stated as follows.

\begin{theorem}\label{thm:1} 
Let P be a bounded n-dimensional Lipschitz polytope. Let $\varphi_{\lambda}$ be a Dirichlet eigenfunction of the Laplace operator corresponding to the eigenvalue $\lambda$, i.e.,
\begin{equation*}
\left\{\begin{aligned}
-\Delta\varphi_\lambda & = \lambda\varphi_{\lambda} \quad & \text{in }& P, \\
\varphi_\lambda & =0\quad & \text{on }& \partial P.
\end{aligned}\right.
\end{equation*}
Then, the $(n-1)$-dimensional Hausdorff measure of the nodal set of $\varphi_{\lambda}$, denoted by $\mathbb{H}^{n-1}(Z(\varphi_{\lambda}))$, satisfies
\begin{equation}
\mathbb{H}^{n-1}\left(Z\left(\varphi_{\lambda}\right)\right)\le C\sqrt{\lambda},
\end{equation}
where $C$ depends only on $P$ and $n$.
\end{theorem}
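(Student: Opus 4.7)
The plan is to follow the local-to-global strategy pioneered by Donnelly--Fefferman and refined by Logunov--Malinnikova--Nadirashvili--Nazarov \cite{LMNN21} for $C^1$ domains and Zhu--Zhuge \cite{ZZ23} for quasiconvex ones, with new ingredients designed to control the nodal set near the codimension-two-and-higher strata of $\partial P$, where $P$ fails to be quasiconvex. After covering $P$ with balls $B(x_j,r)$ of radius $r\sim\lambda^{-1/2}$ (at most $C(P)r^{-n}$ of them), the theorem reduces to the local estimate
\begin{equation*}
\mathbb{H}^{n-1}\bigl(Z(\varphi_\lambda)\cap B(x_j,r)\cap P\bigr)\le C r^{n-1},
\end{equation*}
with $C$ independent of $j$ and $\lambda$; summation over the covering then yields $\mathbb{H}^{n-1}(Z(\varphi_\lambda))\le Cr^{-1}=C\sqrt\lambda$.

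For balls well inside $P$, this is the classical Donnelly--Fefferman interior nodal estimate, which follows from the doubling property of $\varphi_\lambda$ on a slightly larger concentric ball and is automatic from the eigenvalue equation. For balls centered within distance $r$ of the relative interior of a face $F_i$ of $\partial P$, but at distance $\gtrsim r$ from all lower-dimensional strata, the odd reflection of $\varphi_\lambda$ across the hyperplane containing $F_i$ produces an extension $\tilde\varphi_\lambda$ satisfying the same eigenvalue equation on a doubled ball; the nodal set of $\tilde\varphi_\lambda$ is the union of $Z(\varphi_\lambda)\cap B\cap P$ with its reflection, so the interior estimate applied to $\tilde\varphi_\lambda$ transfers to the desired local bound. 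This is exactly the argument of \cite{LMNN21} in the flat-face setting, and is the only boundary case handled by \cite{ZZ23} modulo smoothing tricks that are unavailable for polytopes.

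The genuine novelty is in the balls $B(x_j,r)$ that touch a stratum $S\subset\partial P$ of codimension $k\ge 2$ (ridges, edges, vertices). Such a stratum is a finite union of flat $(n-k)$-dimensional pieces and can be covered by at most $Cr^{-(n-k)}$ balls of radius $r$; if the local bound $\mathbb{H}^{n-1}(Z(\varphi_\lambda)\cap B)\le Cr^{n-1}$ holds in each of them, their total contribution is at most $Cr^{k-1}\le C$, which is dominated by the interior term. To prove the local bound at a point of $S$, I would combine (i) the Grisvard-type asymptotic expansion of Dirichlet eigenfunctions at a polytopal singularity, which represents $\varphi_\lambda$ near a singular point as a sum of homogeneous harmonic functions on the tangent wedge or cone $\mathcal{C}_S$ plus a smoother remainder, with (ii) a doubling (equivalently, Almgren frequency) inequality at such singular points, derived by first reflecting $\varphi_\lambda$ across whichever flat faces bound the ball and then applying a weighted Hardy-type estimate on $\mathcal{C}_S$ to absorb the remainder. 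Once the doubling is in place, the standard Donnelly--Fefferman nodal counting yields the local bound.

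The principal obstacle will be establishing the doubling inequality \emph{uniformly} in $x_j$, in particular as dihedral or solid angles at a nonconvex ridge or vertex approach $2\pi$ and as strata of different codimensions accumulate at vertices. Iterated reflections close up only for very special angles, so the doubling must be proved intrinsically on each model cone rather than by extending $\varphi_\lambda$ to a smooth domain. I expect this to require an induction on the codimension of the stratum, starting at vertices and propagating outward along higher-dimensional strata, combined with a scale-invariant frequency-monotonicity estimate adapted to the tangent cone. Once such a uniform doubling is obtained with a constant depending only on $P$ and $n$, the remainder of the argument follows the \cite{LMNN21}/\cite{ZZ23} templates and completes the proof.
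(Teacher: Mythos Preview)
Your opening reduction is wrong. The local bound $\mathbb{H}^{n-1}(Z(\varphi_\lambda)\cap B(x_j,r))\le Cr^{n-1}$ with $C$ independent of $\lambda$ at scale $r\sim\lambda^{-1/2}$ is false: near a point where $\varphi_\lambda$ vanishes to order $\sim\sqrt\lambda$ (e.g.\ at the center of the disk for $J_m(\alpha_{m,1}r)e^{im\theta}$ with $m\sim\sqrt\lambda$), the nodal set in a wavelength ball already has measure $\sim\sqrt\lambda\,r^{n-1}$. The Donnelly--Fefferman estimate you invoke actually reads $\mathbb{H}^{n-1}(Z\cap B_r)\le C(N+1)r^{n-1}$, and the doubling index $N$ can be as large as $C\sqrt\lambda$ at \emph{every} scale; summing this over $\sim\lambda^{n/2}$ wavelength balls yields $\lambda$, not $\sqrt\lambda$. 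The correct reduction keeps the covering scale fixed (independent of $\lambda$) and lets the doubling index carry the $\sqrt\lambda$.

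The paper does exactly that. One lifts $\varphi_\lambda$ to the harmonic function $u_\lambda(x,t)=e^{t\sqrt\lambda}\varphi_\lambda(x)$ on $P\times\R$, covers a neighborhood of $\partial P$ by a $\lambda$-independent finite family of balls whose intersections with $P$ are \emph{star-shaped} with respect to their centers (Proposition~\ref{prop.partition}), and proves in each a bound of the form $C(N_{u_\lambda}+1)r_0^{n}\le C\sqrt\lambda$ (Theorem~\ref{thm.local}). The crux is a uniform doubling estimate (Lemma~\ref{lem.unifDI}): for a harmonic function vanishing on $B_{r_0}(0)\cap\partial P$ with $B_{r_0}(0)\cap P$ star-shaped, the doubling index at any $x$ and any radius below the maximal star-shape radius $R^*(x)$ is controlled by the doubling index at the origin. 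This is obtained by propagating the doubling index along a chain $x\to z_{n-1}\to z_{n-2}\to\cdots$ of nearest points on successively lower-dimensional skeleta, using only Almgren monotonicity in star-shaped regions (Lemma~\ref{lem.monotonicityDL}) and the elementary geometric fact (Proposition~\ref{prop.lowerbound.Rx}) that $R^*$ at a point of a $k$-face is at least $c^*$ times its distance to the $(k{-}1)$-skeleton. No Grisvard expansions, no iterated reflections, no weighted Hardy inequalities on tangent cones are needed; star-shapedness replaces all of that.
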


The proof of Theorem \ref{thm:1} relies on a lifting argument to convert the eigenfunctions in $P$ to harmonic functions in $P\times \R$. Then the difficulty of estimating the nodal sets for harmonic functions lies in the uniform control of the doubling index centered at any points, particularly when the points are close to the corners or ridges. We develop an approach to control the doubling index which involves the monotonicity and propagation of the doubling index in star-shaped domains, the geometric properties of polytopes (near corners, ridges, etc) and the estimation of maximum star-shape radius (MSR). We remark that our approach may also apply to general second order elliptic equations in piecewise smooth/convex domains. To avoid complexity, we do not pursue this generality in the present paper.

\textbf{Notations.} Throughout the paper, we denote by $c,c_1, c_2,\cdots,$ small positive constants and $C,C_1,C_0, \cdots,$ large positive constants depending only on $n$ and $P$, which may change from line to line.
We use $B_r(x)$ or $B(x,r)$ to denote the ball of radius $r$ centered at $x$. Denote by $d(A,B)$ the Euclidean distance between the objects $A$ and $B$ (which could be points or sets). Denote by $B(E,r) = \{x: d(x,E) < r \}$ the $r$-neighborhood of the set $E$. We write $a\wedge b = \min \{a,b \}$ for $a,b\in \R$.

\textbf{Organizations.} The remainder of the paper is organized as follows. In Section \ref{sec.DI} we recall the monotonicity of the doubling index in star-shaped domains (the outline of the proof will be given in Appendix) and prove a property on the propagation of doubling index. In Section \ref{sec.Polytopes}, we study the useful properties of polytopes. Section \ref{sec.Unif} is dedicated to the uniform estimate of the doubling index. The main theorem is proved in Section \ref{sec.nodal}.
 
\textbf{Acknowledgements.} J.Z. is partially supported by grants for Excellent Youth from the NSFC and AMSS-CAS.
 Y.C. would like to extend her deepest gratitude to Prof. Liqun Zhang for his careful guidance.

\section{Monotonicity of doubling index}\label{sec.DI}

An important tool to study the nodal sets or vainshing order of the eigenfunctions of elliptic operators is the Almgren's frequency function or doubling index. 
In this section, we recall the monotonicity of doubling index in star-shaped Lipschitz domains.

\begin{definition}\label{def.Lip}
We say a domain $\Omega$ is Lipschitz, if there exits $r_{0}>0$ such that for every $x_{0}\in \partial\Omega$, the boundary patch $\partial\Omega \cap B_{r_{0}}(x_{0})$, after a rigid transformation, can be expressed as a Lipschitz graph $x^{n}=\phi(x')$ such that 
\begin{equation}\label{LipGraph}
B_{r_{0}}(x_{0})\cap \Omega =B_{r_{0}}(x_{0})\cap \left \{ x=(x^1,x^2,\dots ,x^n)=(x',x^n):x^n > \phi (x') \right \} 
\end{equation}
and we call $r_{0}$ the Lipschitz radius of $\Omega$. Moreover, the Lipschitz constants of these graphs are uniformly bounded by some constant $L$ . 
\end{definition}

\begin{definition}
    We say a connected Lipschitz domain $\Omega$ is star-shaped with respect to some $x\in \overline{\Omega}$ if for almost every $y\in \partial \Omega$,
\begin{equation}
    (y-x) \cdot n(y) \ge 0,
\end{equation}
where $n(y)$ is the outer normal to $\partial \Omega$ at $y$.
Alternatively, a Lipschitz domain $\Omega$ is star-shaped with respect to $x\in \overline{\Omega}$ if for each $y\in \partial \Omega$, the line segment connecting $x$ and $y$ is entirely contained in $\overline{\Omega}$.
\end{definition}

Let $u$ be a harmonic function in $\Omega$.
Define the doubling index of $u$ in $\Omega$ centered at $x \in \overline{\Omega}$ by
\begin{equation*}
N_{u}(x, r)=\log_2 \frac{\int_{B(x,2r)\cap\Omega} u^{2}}{\int_{B(x,r)\cap \Omega} u^{2}}.
\end{equation*}

The following lemma was essentially proved in \cite[Lemma 3.1]{KN98}. We outline the proof in Appendix.

\begin{lemma}\label{lem.monotonicityDL} 
Let $\Omega$ be a Lipschitz domain in $\mathbb{R}^{n}$. Let $x \in \overline{\Omega}$ and $R>0$. Assume that $\Omega \cap B(x,2R )$ is star-shaped with respect to $x$. Suppose $u\in C(\overline{\Omega})$ is a non-zero harmonic function in $\Omega \cap B(x,2R )$ and $u=0$ on $\partial \Omega \cap B(x,2R).$  Then for any $0<s<r<R$, we have
\begin{equation}\label{est.monotonicityDL}
N_{u}(x,s)\le N_{u}(x,r).
\end{equation}
\end{lemma}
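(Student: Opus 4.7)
The plan is to deduce the lemma from the monotonicity of an Almgren-type frequency function. After translation we may assume $x = 0$. Set $\Omega_r := \Omega \cap B(0,r)$ and $S_r := \partial B(0,r) \cap \Omega$, and introduce
\begin{equation*}
I(r) = \int_{\Omega_r} u^2\, dy,\quad H(r) = \int_{S_r} u^2\, d\sigma,\quad D(r) = \int_{\Omega_r}|\nabla u|^2\, dy,\quad N(r) = \frac{r D(r)}{H(r)}.
\end{equation*}
Since $N_u(0,r) = \log_2(I(2r)/I(r))$, it suffices to show that $N(r)$ is non-decreasing on $(0,2R)$, as then the doubling index monotonicity \eqref{est.monotonicityDL} will follow from a standard log-convexity argument.

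\textbf{Key identities.} First, by the coarea formula, $I'(r) = H(r)$. Second, integrating by parts with $\Delta u = 0$ in $\Omega_r$ and the Dirichlet condition $u = 0$ on $\partial \Omega \cap B(0,2R)$ (which kills the corresponding piece of $\partial \Omega_r$) gives
\begin{equation*}
D(r) = \int_{S_r} u\,\partial_\nu u\, d\sigma,\qquad H'(r) = \frac{n-1}{r}H(r) + 2D(r).
\end{equation*}
To control $D'(r)$, I would invoke the Rellich--Pohozaev identity obtained by multiplying $\Delta u = 0$ by $y\cdot \nabla u$ and integrating over $\Omega_r$. The boundary contribution splits into a piece on $S_r$ that produces the standard spherical terms and a piece on $\partial\Omega \cap B(0,r)$. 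On the latter, the Dirichlet condition forces $\nabla u = (\partial_\nu u)\nu$, so the contribution reduces to $\tfrac{1}{2}\int_{\partial\Omega \cap B(0,r)} (y \cdot \nu)\,|\partial_\nu u|^2\, d\sigma$, which is non-negative by the star-shapedness of $\Omega \cap B(0,2R)$ with respect to $0$.

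\textbf{Frequency monotonicity and conclusion.} Combining these with the formulas for $D(r)$ and $H'(r)$, a direct computation of $N'(r)$ reduces, after discarding the non-negative boundary term, to the Cauchy--Schwarz inequality on $S_r$ between $u$ and $\partial_\nu u$, which is always satisfied; hence $N'(r) \ge 0$. From here the equivalence $\tfrac{d}{d\log r}\log\bigl(H(r)/r^{n-1}\bigr) = 2N(r)$ yields log-convexity of $r \mapsto H(r)/r^{n-1}$ in $\log r$, and integrating $I'(r) = H(r)$ converts this into monotonicity of $\log_2(I(2r)/I(r)) = N_u(0,r)$ in $r$, giving \eqref{est.monotonicityDL}.

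\textbf{Main obstacle.} The principal technical point is the rigorous justification of the Rellich--Pohozaev identity on the Lipschitz domain $\Omega_r$: although $u$ is smooth inside $\Omega$, the boundary integrals on $\partial \Omega$ require $|\nabla u|$ to be well-defined in $L^2_{\mathrm{loc}}(\partial \Omega \cap B(0,2R))$, and the identity involves an integration by parts on a nonsmooth set. I would handle this by approximation --- either by smooth interior subdomains of $\Omega$ that retain star-shapedness with respect to $0$, or by regularizing $u$ via dilations along the radial field $y \mapsto y/r$ --- and pass to the limit using Rellich-type square-function bounds for harmonic functions with zero Dirichlet data on Lipschitz domains, as in the approach of Kukavica--Nystr\"om.
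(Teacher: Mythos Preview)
Your proposal is correct and follows essentially the same route as the paper's proof (given in the Appendix): monotonicity of the Almgren frequency $N(r)=rD(r)/H(r)$ via the Rellich--Pohozaev identity with star-shapedness providing the sign of the $\partial\Omega$ boundary term, then the log-convexity relation $\tfrac{d}{d\log r}\log\bigl(H(r)/r^{n-1}\bigr)=2N(r)$, and finally passage from $H$ to $I$ by integrating $I'=H$. The paper simply cites Kukavica--Nystr\"om for the frequency monotonicity (exactly the reference you invoke) and makes the last step explicit via the ``four-sphere'' inequality $H(2\tau)H(t)\le H(\tau)H(2t)$ integrated over $\tau\in(0,s)$ and $t\in(s,r)$.
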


We also need a lemma to quantify the propagation of doubling index in star-shaped domains when the center is shifted.

\begin{lemma}\label{lem.PropagationOfDB} Let $\Omega$ be an $n$-dimensional Lipschitz domain and $0\in \partial \Omega$. Let $u$ be a harmonic function in $\Omega \cap B_R(0)$ and $u = 0$ on $\partial \Omega \cap B_R(0)$. Let $x\in \overline{\Omega}\cap B_\frac{R}{2}(0)$. Suppose that there is an embedded Lipschitz curve 
 $\gamma :[0,1] \to \overline{\Omega} \cap B(0,R)$ satisfying the following conditions:
 \begin{itemize}
    \item[(i)] $\gamma(0) = 0$ and $\gamma(1) = x$.

    \item[(ii)] For some $0<r<\frac14 R$ and all $s\in [0,1]$, $\Omega \cap B(\gamma(s), 4r)$ is contained in $\Omega \cap B_{R\wedge C_1 r}(0)$ for some $C_1 > 1$ and is star-shaped with respect to $\gamma(s)$.
    
     \item[(iii)] For some constant $C_2>1$,  $|\gamma| \le C_2 r$.

    \item[(iv)] $\Omega \cap B_R(0)$ is star-shaped with respect to $0$.
 \end{itemize}
Then there exists $C=C(C_1, C_2)$ such that
 \begin{equation}\label{est.propagation}
N_{u}(x,r)\le C N_{u}(0,\frac12 R).
\end{equation}
\end{lemma}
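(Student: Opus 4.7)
The plan is to propagate the doubling index along $\gamma$ by a chain of intermediate centers, combining an upper bound on $H(y, 2r) := \int_{B(y,2r) \cap \Omega} u^2$ coming from the global star-shape at $0$ with a recursive lower bound on $H(y, r)$ at successive centers. Set $M := N_u(0, R/2)$ and pick $y_0 = 0, y_1, \dots, y_K = x$ on $\gamma$ with $|y_i - y_{i+1}| \le r/2$ and $K = \lceil 2 C_2 \rceil$, which is possible by (iii). Abbreviate $a_i := N_u(y_i, r)$; the target is $a_K \le C(C_1, C_2) M$.

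For the upper bound, condition (ii) gives $B(y_i, 2r) \cap \Omega \subset B(0, R \wedge C_1 r) \cap \Omega$, hence $H(y_i, 2r) \le H(0, R \wedge C_1 r)$. Condition (iv) with Lemma \ref{lem.monotonicityDL} at $0$ gives $N_u(0, \rho) \le M$ for $\rho \le R/2$, and iterating the doubling inequality $H(0, 2\rho) \le 2^M H(0, \rho)$ across dyadic scales between $r$ and $R$ (together with $H(0, R)/H(0, R/2) = 2^M$) produces the polynomial bound $H(0, \rho) \le (2\rho/r)^M H(0, r)$ for all $\rho \le R$. Consequently,
\begin{equation*}
\log_2 H(y_i, 2r) \le \log_2 H(0, r) + A, \qquad A := M \log_2(2 C_1).
\end{equation*}

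For the lower bound, $|y_i - y_{i+1}| \le r/2$ gives $B(y_i, r/2) \subset B(y_{i+1}, r)$, so $H(y_{i+1}, r) \ge H(y_i, r/2)$. The star-shape in (ii) lets me invoke Lemma \ref{lem.monotonicityDL} at $y_i$ for radii $<2r$, yielding $N_u(y_i, r/2) \le a_i$ and hence $H(y_i, r/2) \ge 2^{-a_i} H(y_i, r)$. Iterating from $y_0 = 0$,
\begin{equation*}
\log_2 H(y_i, r) \ge \log_2 H(0, r) - \sum_{j < i} a_j.
\end{equation*}

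Subtracting the two estimates gives the recursion $a_i \le A + \sum_{j < i} a_j$; the base case $a_0 \le M \le A$ follows from monotonicity at $0$ and $C_1 \ge 1$. A short induction yields $a_i \le 2^i A$, and therefore $N_u(x, r) = a_K \le 2^K A = C(C_1, C_2) N_u(0, R/2)$, as required. The main subtlety is that at interior centers $y_i$ the star-shape radius is only $4r$, so Lemma \ref{lem.monotonicityDL} provides monotonicity there only up to radius $2r$---too small to directly compare doubling indices at $y_i$ and at $y_{i+1}$. The role of the global center $0$, where star-shape extends to $R$, is precisely to supply a uniform upper bound on $H(y_i, 2r)$ that sidesteps this obstruction; the price of this bypass is the exponential-in-$K$ factor, which is why a multiplicative (rather than merely additive) constant $C(C_1, C_2)$ appears in the conclusion.
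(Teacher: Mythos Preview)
Your proof is correct and follows essentially the same approach as the paper's: a chain of points along $\gamma$, a uniform upper bound on $H(y_i,2r)$ via the containment $B(y_i,4r)\cap\Omega\subset B(0,R\wedge C_1 r)\cap\Omega$ together with monotonicity at $0$, and a propagated lower bound on $H(y_i,r)$ using Lemma~\ref{lem.monotonicityDL} at each intermediate center. The only cosmetic differences are that the paper takes step size $r$ (using the three-ball inequality at radii $r,2r,4r$) and phrases the iteration as a squaring of integrals, whereas you take step size $r/2$ and package the same argument as the recursion $a_i\le A+\sum_{j<i}a_j$; both yield the identical exponential-in-$C_2$ constant.
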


\begin{proof}
    Let $R_1 := R \wedge C_1 r$. Without loss of generality, we assume
    \begin{equation}\label{eq.assump.BR1}
        \int_{\Omega \cap B_{R_1}(0)} u^2 = 1.
    \end{equation}
    Let $N_0 = N_u(0,R_1/2)$. Since $\Omega \cap B_R(0)$ is star-shaped with respect to the origin, by the monotonicity of the doubling index, we have $N_0 \le N_{u}(0,\frac12 R)$.
    
    By the monotonicity of the doubling index again and the assumption \eqref{eq.assump.BR1}, we can show
    \begin{equation}\label{est.PBr0}
        \int_{\Omega \cap B_r(0)} u^2 \ge \exp_2(- N_0 (1+ \log_2 \frac{R_1}{r})) \ge \exp_2(- N_0 (1+ \log_2 C_1)),
    \end{equation}
    where $\exp_2(a) = 2^a.$
    Let $x_1 = \gamma(s_1)$ for some $s_1\in (0,1)$ and $|x_1 - 0| = r$. Since $B_{4r}(x_1)$ is star-shaped with respect to $x_1$ (due to (iii)), using the monotonicity of the doubling index (or the three ball inequality), we have
    \begin{equation}
    \begin{aligned}
        \int_{B_r(0)\cap \Omega} u^2 &\le \int_{B_{2r}(x_1)\cap \Omega} u^2 \\
        &\le \bigg( \int_{B_r(x_1)\cap \Omega} u^2 \bigg)^{1/2} \bigg( \int_{B_{4r}(x_1)\cap \Omega} u^2 \bigg)^{1/2} \le \bigg( \int_{B_r(x_1)\cap \Omega} u^2 \bigg)^{1/2},
    \end{aligned}
    \end{equation}
    where we also used the assumptions $B_{4r}(x_1)\cap \Omega \subset B_{R_1}(0) \cap \Omega$ and $\int_{\Omega\cap B_{R_1}(0)} u^2 = 1$. It follows that
    \begin{equation}\label{est.PropagSmall}
        \int_{B_r(x_1)\cap \Omega} u^2 \ge \bigg( \int_{B_r(0)\cap \Omega} u^2 \bigg)^2.
    \end{equation}

    Now we repeat this process and find a sequence of $x_j$ connecting $0$ and $x$ with $1\le j\le m$ and $x_m = x$ such that $|x_j - x_{j-1}| = r$ for $j<m$ and $|x_m - x_{m-1}| \le r$. By (ii), we see that $m\le |\gamma|/r \le C_2$. The previous argument yields
    \begin{equation}
        \int_{B_r(x_j)\cap \Omega} u^2 \ge \bigg( \int_{B_r(x_{j-1})\cap \Omega} u^2 \bigg)^2, \qquad \text{for any } 2\le j\le m.
    \end{equation}
    This together with \eqref{est.PropagSmall} and \eqref{est.PBr0} gives
    \begin{equation}
        \int_{B_r(x)\cap \Omega} u^2 \ge \bigg( \int_{B_r(0)\cap \Omega} u^2 \bigg)^{2^m} \ge \exp_2(- N_0 (1+ \log_2 C_1)2^{C_2} ).
    \end{equation}
    Consequently, by the last estimate and \eqref{eq.assump.BR1} (note the fact $B_{2r}(x) \cap \Omega \subset B_{R_1}(0) \cap \Omega$ ), we have
    \begin{equation}
        N_u(x,r) =  \log_2 \frac{\int_{B_{2r}(x)\cap \Omega} u^2 }{\int_{B_r(x)\cap \Omega} u^2} \le N_0 (1+ \log_2 C_1)2^{C_2}  \le C  N_{u}(0,\frac12 R).
    \end{equation}
    This ends the proof.
\end{proof}

\section{Polytopes}\label{sec.Polytopes}
\subsection{Geometry of polytopes}
A polytope is a geometric object with flat sides (faces), generalizing the two dimensional polygons and three-dimensional polyhedras to any number of dimensions. An $n$-dimensional polytope (as a connected domain) in $\R^n$, bounded by a finite number of $(n-1)$-dimensional facets, will be called an $n$-polytope. These facets are themselves $(n-1)$-polytopes, whose facets are $(n-2)$-polytopes (called ridges), and so forth. These bounding $k$-polytopes (with $0\le k< n$) will be called $k$-faces of the original $n$-polytope. Typically, $k$-faces arise as the intersection of $(k+1)$-faces. For example, 0-faces are vertices, 1-faces are edges of line segments and $(n-1)$-faces are facets of the original $n$-polytope.

Let $P$ be a given $n$-polytope. For each integer $j$ with $1\le j\le n$, we define $\mathcal{F}^{n-j}$ as the collection of all $(n-j)$-faces of $P$ and $\mathscr{F}^{n-j}$ as the union of all $(n-j)$-faces of $P$. Let $\mathcal{F} = \cup_{j=1}^n \mathcal{F}^{n-j}$. By the previous definition, for each $2 \le j\le n$, we have
\begin{equation*}
\mathscr{F}^{n-j}\subset \mathscr{F}^{n-j+1},\quad \mathscr{F}^{n-1}=\partial P.
\end{equation*}
Moreover, for a given $(n-j)$ face $F \in \mathcal{F}^{n-j}$, we denote by $\partial F$ the boundary of $F$, which consists of $(n-j-1)$-faces contained in $\mathscr{F}^{n-j-1}$.

The following is a Euclidean geometric fact of polytopes that characterizes the distance between points on a given $(n-j)$-face and other $(n-1)$-faces not containing the given face.


\begin{lemma}\label{lem.distF}
Let P be a $n$-polytope. Then there exists $c^{*}>0$ such that for any $F \in \mathcal{F}$ and $G \in \mathcal{F}^{n-1}$ satisfying $F\cap G \neq F$ (or equivalently $F \nsubseteq G$), we have
\begin{equation}\label{eq.dxG}
d(x,G)\ge c^{*}d(x,\partial F),\quad \text{for any } x\in F.
\end{equation}
\end{lemma}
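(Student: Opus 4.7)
The plan is to exploit the finiteness of the face structure of $P$: there are only finitely many pairs $(F,G)$ with $F\in\mathcal{F}$, $G\in\mathcal{F}^{n-1}$ and $F\not\subseteq G$, so it suffices to produce, for each such pair, a positive constant $c_{F,G}$ with $d(x,G)\ge c_{F,G}\,d(x,\partial F)$ for all $x\in F$, and then set $c^*:=\min c_{F,G}$. If $F\cap G=\emptyset$, compactness of $F$ and $G$ gives $d(F,G)>0$, which combined with $d(x,\partial F)\le\operatorname{diam}(F)$ settles the case at once.

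Assume henceforth $E:=F\cap G\ne\emptyset$. Under the natural face structure, $E$ is a face of $P$ with $\dim E<\dim F$, so $E\subseteq\partial F$. The function $\phi(x):=d(x,G)/d(x,\partial F)$ is continuous and strictly positive on $F\setminus\partial F$ (positivity since $F\cap G\subseteq\partial F$), and by compactness its infimum is positive provided $\liminf_{x\to y}\phi(x)>0$ for every $y\in\partial F$. For $y\in\partial F\setminus G$ this is automatic ($\phi\to\infty$), so the analysis localizes to points $y\in E$. The crucial geometric input is that near such a $y$, $P$ lies in a closed half-space $\overline{H_G}$ bounded by the supporting hyperplane $\Pi_G$ of $G$, whence $F\cap B(y,r)\subseteq\Pi_F\cap\overline{H_G}$ for small $r$, where $\Pi_F$ is the affine hull of $F$.

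I would then split into two sub-cases. In Sub-case (A), where $\Pi_F\not\subseteq\Pi_G$, the subspaces meet at a positive dihedral angle $\alpha$, and the projection formula $d(x,\Pi_G)=\sin\alpha\cdot d_{\Pi_F}(x,\Pi_F\cap\Pi_G)$ gives $d(x,G)\ge\sin\alpha\cdot d_{\Pi_F}(x,\Pi_F\cap\Pi_G)$; the half-space property then forces every $(\dim F-1)$-face $F_j\subseteq\partial F$ contained in $G$ to have affine hull equal to $\Pi_F\cap\Pi_G$, so that for $x$ near the relative interior of such an $F_j$ with $F_j$ the nearest boundary face one has $d(x,\partial F)=d_{\Pi_F}(x,\Pi_F\cap\Pi_G)$, yielding $d(x,G)\ge\sin\alpha\cdot d(x,\partial F)$. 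In Sub-case (B), where $\Pi_F\subseteq\Pi_G$ and hence $F\subseteq\Pi_G$, the point $y$ cannot lie in the relative interior of $G$, since otherwise a neighborhood of $y$ in $\Pi_G\supseteq F$ would sit in $G$, forcing $F\subseteq G$ by closedness and contradicting the assumption; thus $y\in\partial G$, which is a union of lower-dimensional faces, and the problem reduces to a configuration inside $\Pi_G$ handled by induction on the ambient dimension.

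The main obstacle is a uniform local bound at points $y\in E$ belonging simultaneously to several lower-dimensional strata of $\partial F$, where the ``nearest face'' varies with $x$, and the interplay of this phenomenon with Sub-case (B). A clean way to unify the analysis is a tangent-cone / blow-up argument at each $y\in E$: rescaling around $y$ causes $F$, $G$ and $\partial F$ to converge to their polytopal tangent cones $T_yF$, $T_yG$, $T_y(\partial F)$, and $\phi$ reduces to a ratio of distances between these cones. Since there are only finitely many local combinatorial models and the half-space observation applies to each, one obtains a strictly positive lower bound, producing the desired constant $c_{F,G}$ and completing the argument.
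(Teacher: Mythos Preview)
Your overall architecture---reduce to finitely many pairs $(F,G)$, split on whether $F\cap G$ is empty, and in the nonempty case show the ratio is bounded below by a compactness/blow-up argument---is correct and matches the paper's outline. The paper's proof is in fact much terser than yours: in the case $F\cap G\ne\emptyset$ it simply asserts, as ``a simple geometric observation,'' that
\[
d(x,G)\ \ge\ c_{F,G}\,d(x,F\cap G)\ \ge\ c_{F,G}\,d(x,\partial F),
\]
using $F\cap G\subseteq\partial F$ for the second inequality, and then takes the minimum of $c_{F,G}$ over all pairs. Your tangent-cone argument is precisely a way to justify that first inequality, so you are supplying detail the paper omits rather than departing from it.

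There is one simplification you overlook. By routing through $d(x,F\cap G)$ instead of $d(x,\partial F)$, the paper sidesteps the ``multiple strata of $\partial F$'' difficulty you flag: the only boundary points of the ratio $d(x,G)/d(x,F\cap G)$ that matter are those in $F\cap G$ itself, and there the tangent-cone analysis (or, in Sub-case~(A), the single dihedral angle $\alpha$ between $\Pi_F$ and $\Pi_G$) is cleaner because $F\cap G$ sits in the single affine subspace $\Pi_F\cap\Pi_G$. Your direct attack on $d(x,G)/d(x,\partial F)$ forces you to worry about which facet of $\partial F$ realizes the distance, which is why you end up needing the full blow-up to close the argument. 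Both routes work; the paper's is shorter but less explicit.
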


\begin{proof}
We consider the cases $F\cap G\neq \emptyset$ and $F\cap G = \emptyset$ separately.
When $F\cap G\ne \emptyset$, we have $F\cap G  = \partial F \cap G$. A simple geometric observation yields the existence of a constant $c_{F,G} \in (0,1]$ such that for any $x\in F$
\begin{equation}
d(x,G)\ge c_{F,G}d(x, F\cap G)\ge c_{F,G}d(x, \partial F).
\end{equation}
On the other hand, we note that the number of faces of $P$ is finite. Hence,
\begin{equation}
c':=\min \{ c_{F,G}: F\in \mathcal{F}, G\in \mathcal{F}^{n-1}, F\cap G \neq F, F\cap G\ne \emptyset \} > 0.
\end{equation}
Thus, we have $d(x, G)\ge c' d(x,\partial F)$ in this case.

When $F\cap G=\emptyset$, we have $d(F,G) > 0$. Since the number of the pairs under this condition is finite, we can define 
\begin{equation}
c'' :=\min \{ d(F,G): F\in \mathcal{F}, G\in \mathcal{F}^{n-1},F\cap G= \emptyset\} >0.
\end{equation}
Consequently, for any $x\in F$, we have 
\begin{equation}
d(x,G)\ge d(F,G)\ge c'' \ge \frac{c''}{\text{diam}(P)}d(x, \partial F).\end{equation}
Therefore, combining the above two cases and choosing $c^{*}=c' \wedge \frac{c''}{diam(P)}$, we establish (\ref{eq.dxG}).
\end{proof}

\subsection{Maximum star-shape radius (MSR)}
The key to estimate the nodal sets is to control the doubling index centered at every point in $\overline{P}$. Due to the monotonicity of doubling index in star-shaped regions, it is natural to introduce the maximum star-shape radius at each point. Precisely, define
\begin{equation}
\begin{aligned}
    R^*(x) = \max \{ r\ge 0 : & \text{the connected component of } B_r(x) \cap P \\
    & \text{ is star-shaped with respect to } x  \} \wedge \text{diam} (P).
\end{aligned}
\end{equation}
The following crucial lemma gives lower bounds of MSR, based essentially on the geometry of polytopes. In particular, it tells us that the MSR can be large on the interior portions of any $j$-faces.
\begin{proposition}\label{prop.lowerbound.Rx}
    For each point $x\in \overline{P}$, we have
    \begin{equation}\label{est.R*x}
        R^*(x) \ge d(x, \mathscr{F}^{n-2}).
    \end{equation}
    In addition, if $x \in \mathscr{F}^{n-i}\setminus \mathscr{F}^{n-i-1}$ with $i=2,3,\cdots,n-1.$, then
    \begin{equation}\label{est.R*x2}
        R^*(x)\ge c^* d (x,\mathscr{F}^{n-i-1}),
\end{equation}
where $c^*>0$ is the constant given in Lemma \ref{lem.distF}.
    Moreover, if $x\in \mathscr{F}^0$, then
    \begin{equation}\label{est.R*x3}
        R^*(x) \ge \inf_{G\in \mathcal{F}^{n-1}, x\notin G}  d(x, G) \ge R_0 > 0,
    \end{equation}
    for some constant $R_0$ depending on $P$.
\end{proposition}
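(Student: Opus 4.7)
The plan is to check star-shapedness via the normal characterisation: for a Lipschitz domain $\Omega$ and $x\in\overline{\Omega}$, $\Omega$ is star-shaped with respect to $x$ iff $(y-x)\cdot n(y)\ge 0$ for a.e.\ $y\in\partial\Omega$. Applied to $\Omega=$ the connected component of $B_r(x)\cap P$ containing $x$, the boundary decomposes into a spherical piece (on which the condition is just $r\ge 0$) and pieces on $\partial P$. Because $\partial P\setminus\mathscr{F}^{n-2}$ is covered by the relative interiors of the facets, lower-dimensional faces contribute only a measure-zero subset of $\partial\Omega$ and may be ignored. What remains is to verify $(y-x)\cdot n_F\ge 0$ whenever $y$ lies in the relative interior of a facet $F$ bounding $\Omega$; since $y$ lies in the affine hyperplane $\Pi_F$ spanned by $F$, this reduces to $x\in H_F$, the closed half-space bounded by $\Pi_F$ that contains $P$ locally at $F$.

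For Part 1, take $r=d(x,\mathscr{F}^{n-2})$. Any facet $F$ meeting $B_r(x)$ has $\partial F\cap B_r(x)=\emptyset$, so $F\cap B_r(x)$ is both relatively open in $\Pi_F\cap B_r(x)$ (otherwise a boundary point would sit in $\partial F\subset\mathscr{F}^{n-2}$) and relatively closed; the connectedness of the disc $\Pi_F\cap B_r(x)$ forces $F\cap B_r(x)=\Pi_F\cap B_r(x)$, so each such facet slices $B_r(x)$ completely in two. These slices partition $B_r(x)$ into convex cells of the form $B_r(x)\cap\bigcap_i H_i^{\pm}$, and since $\partial P\cap B_r(x)$ sits in the union of the slices, each cell is entirely contained in $P$ or entirely in $P^c$. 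The cell containing $x$ is a convex subset of $P$ equal to $\Omega$, and convex sets are star-shaped with respect to any of their points.

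For Part 2, let $F^*$ be the unique $(n-i)$-face with $x\in(F^*)^\circ$ and fix $r<c^*d(x,\mathscr{F}^{n-i-1})$. I claim no facet $G$ with $F^*\not\subseteq G$ meets $B_r(x)$: Lemma \ref{lem.distF} applied with $F=F^*$ gives $d(x,G)\ge c^*d(x,\partial F^*)\ge c^*d(x,\mathscr{F}^{n-i-1})>r$, using $\partial F^*\subseteq\mathscr{F}^{n-i-1}$. Hence every facet bounding $\Omega$ must contain $F^*$, and so in particular contains $x$; then $x\in\Pi_G$, and $(y-x)\cdot n_G=0$ automatically on $y\in G$. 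The normal characterisation then gives star-shapedness. Part 3 is the special case $F^*=\{x\}$: for $r<\inf\{d(x,G):G\in\mathcal{F}^{n-1},\,x\notin G\}$, every facet meeting $B_r(x)$ contains $x$, and the same argument applies; the uniform $R_0>0$ is obtained by minimising this infimum over the finitely many vertices, for each of which the infimum is positive as only finitely many facets avoid it.

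The step I expect to need the most care is the slicing statement in Part 1: not only that each facet meeting $B_r(x)$ cuts $B_r(x)$ cleanly along $\Pi_F$, but also that the resulting convex cell containing $x$ is exactly the component $\Omega$ of $B_r(x)\cap P$ containing $x$, so that its convexity really does translate into star-shapedness of $\Omega$. Parts 2 and 3 are then comparatively mechanical, relying only on Lemma \ref{lem.distF} together with the observation that $(y-x)\cdot n_G\equiv 0$ whenever both $y$ and $x$ lie in $\Pi_G$.
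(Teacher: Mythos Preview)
Your proposal is correct and follows essentially the same approach as the paper. For Parts 2 and 3 the arguments are nearly identical (exclude facets not containing $x$ via Lemma~\ref{lem.distF}, then observe $(y-x)\cdot n_G=0$ on the remaining facets); for Part 1 the paper simply asserts that $B_{r_x}(x)\cap P$ is convex because its boundary consists of a spherical cap and flat facet pieces, whereas you supply the more careful cell-partition justification for that convexity---but the underlying idea is the same.
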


\begin{proof}
    Let $r_x = \text{dist}(x, \mathscr{F}^{n-2})$. To show \eqref{est.R*x}, it suffices to show that the connected component of $B_{r_x}(x) \cap P$ is star-shaped. In fact, by the definition of $r_x$, we see that $B_{r_x}(x) \cap P$ does not intersect with $\mathscr{F}^{n-2}$ and thus the boundary of $B_{r_x}(x) \cap P$ consists of a portion of $\partial B_{r_x}(x)$ and flat faces (interior portions of $(n-1)$-faces). Hence, $B_{r_x}(x) \cap P$ is convex and thus star-shaped.

    Now we consider the particular case $x \in \mathscr{F}^{n-i}\setminus \mathscr{F}^{n-i-1}$ and prove \eqref{est.R*x2}. Recall that $\mathcal{F}^{n-i}$ is the collection of all $(n-i)$-faces. Let $\mathcal{F}^{n-i} = \{ F^{n-i}_{j}: 1\le j\le N_{n-i} \}$, where $N_{n-i}$ is the number of all $(n-i)$-faces. Since $x\notin \mathscr{F}^{n-i-1}$, $x$ is contained in exactly one $(n-i)$-face. Without loss of generality, assume $x\in F^{n-i}_1$. Moreover, due to the fact $\partial F^{n-i}_1 \subset \mathscr{F}^{n-i-1}$,
    \begin{equation}
        d(x, \partial F^{n-i}_1) \ge d(x,\mathscr{F}^{n-i-1}).
    \end{equation}
    Now, for any $(n-1)$-face $G \in \mathcal{F}^{n-1}$ not containing $F^{n-i}_1$, by Lemma \ref{lem.distF},
    \begin{equation}\label{est.dxG}
        \dist(x, G) \ge c^* d(x, \partial F^{n-i}_1) \ge c^* d(x,\mathscr{F}^{n-i-1}).
    \end{equation}
    Let $r_x^{n-i-1}:=d (x,\mathscr{F}^{n-i-1})$. Hence, \eqref{est.dxG} implies that $B(x, c^* r_x^{n-i-1})$ does not intersect with any $(n-1)$-faces not containing $F^{n-i}_1$. This further implies
    \begin{equation}\label{eq.BpartialP}
        B(x, c^* r_x^{n-i-1}) \cap \partial P =B(x, c^* r_x^{n-i-1}) \cap  \bigcup_{F\in \mathcal{F}^{n-1}, x\in F} F.
    \end{equation}
    
    Next we claim that $B(x, c^* r_x^{n-i-1}) \cap P$ is star-shaped with respect to $x$. Notice that
    \begin{equation}\label{eq.LocalBdry}
        \partial (B(x, c^* r_x^{n-i-1}) \cap P) = \Big (\partial B(x, c^* r_x^{n-i-1}) \cap P \Big) \bigcup \Big( B(x, c^* r_x^{n-i-1}) \cap \partial P \Big).
    \end{equation}
    It suffices to show that for any $y\in \partial (B(x, c^* r_x^{n-i-1}) \cap P)$, we have $n(y)\cdot (y-x) \ge 0$, where $n(y)$ is the outward normal of $B(x, c^* r_x^{n-i-1}) \cap P$ at $y$. In view of \eqref{eq.LocalBdry}, we have two cases. If $y \in \partial B(x, c^* r_x^{n-i-1}) \cap P$, then $n(y)$ has the same direction as $y-x$ and thus $n(y)\cdot (y-x) > 0$. If $y \in B(x, c^* r_x^{n-i-1}) \cap \partial P$, then by \eqref{eq.BpartialP}, $y\in B(x, c^* r_x^{n-i-1}) \cap F$ for some $F\in \mathcal{F}^{n-1}$ and $x\in F$. Since $F$ is a portion of a hyperplane, $y-x$ is contained in the hyperplane and $n(y)$ (exists a.e.) is perpendicular to the hyperplane. Thus $n(y)\cdot (y-x) = 0$. Thus the two cases combined lead to the claim.

    The above claim then implies
    \begin{equation}
        R^*(x) \ge c^*r_x^{n-i-1} = c^* d (x,\mathscr{F}^{n-i-1}),
    \end{equation}
    as desired.

    Finally, if $x\in \mathscr{F}^0$ and $r^0_x = \inf_{G\in \mathcal{F}^{n-1}, x\notin G}  d(x, G) > 0$, then
    \begin{equation}
        B(x, r^0_x) \cap \partial P = B(x,r^0_x) \cap \bigcup_{F\in \mathcal{F}^{n-1}, x\in F} F.
    \end{equation}
    Following the same argument as before, we can show that $B(x,r^0_x) \cap P$ is star-shaped and $R^*(x) \ge r^0_x$. Since $x\in \mathscr{F}^0$ is a vertex of $P$ and $P$ has a finite number of vertices, then $r^0_x \ge R_0$ for some constant $R_0>0$.
\end{proof}

The next lemma shows that the boundary of a bounded $n$-polytope can be covered by a finite number of star-shaped subdomains.

\begin{proposition}\label{prop.partition}

Let $P$ be a bounded $n$-polytope and $R_0$ given as Proposition \ref{prop.lowerbound.Rx}. Then for a fixed $r_0 \le R_0$, there exist finite number of balls $\{ B(x_{k,j},(\frac{1}{32}c^{*})^{k} c^* r_0 ):j\in \{1,2,\cdots, M_k \} \}_{ \{k=0,1,\cdots, n-1\} }$ with finite overlaps satisfying
\begin{itemize}
    \item[(i)] $x_{0,j} \in \mathscr{F}^0$ and $x_{k,j} \in \mathscr{F}^{k} \setminus \mathscr{F}^{k-1}$ for all $1\le k\le n-1$. 

    \item[(ii)] For each $k$ and $j$, $P \cap B(x_{k,j}, (\frac{1}{32}c^{*})^{k} c^* r_0)$ is star-shaped with respect to $x_{k,j}$.

    \item[(iii)]
    \begin{equation}
        \partial P \subset \bigcup_{k=0}^{n-1} \bigcup_{j=1}^{M_k} B(x_{k,j}, 2(\frac{1}{32}c^{*})^{k+1}r_0).
    \end{equation}
\end{itemize}
\end{proposition}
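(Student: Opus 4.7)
The plan is to place centers $x_{k,j}$ on the ``core'' of $\mathscr{F}^k$, where the distance to $\mathscr{F}^{k-1}$ is at least $\tilde\delta_k:=(c^*/32)^k r_0$: at such points Proposition~\ref{prop.lowerbound.Rx} supplies precisely the required star-shape radius $\sigma_k:=(c^*/32)^k c^* r_0 = c^*\tilde\delta_k$. The covering radius $\tau_k := 2(c^*/32)^{k+1}r_0 = (c^*/16)\tilde\delta_k$ is only a $(c^*/16)$ fraction of $\tilde\delta_k$, so level-$k$ balls cannot reach into the $\tilde\delta_k$-neighborhood of $\mathscr{F}^{k-1}$; those points will have to be captured by a ball from a strictly lower level via a telescoping descent. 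The specific geometric ratio $c^*/32$ between successive scales is tuned so that this descent closes.

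First I would set $A_k:=\{y\in\mathscr{F}^k:d(y,\mathscr{F}^{k-1})\ge \tilde\delta_k\}$ with the convention $\mathscr{F}^{-1}=\emptyset$, so $A_0=\mathscr{F}^0$. Since $A_k$ is closed inside the compact set $\mathscr{F}^k$, it is compact, and I pick a finite family $\{x_{k,j}\}_{j=1}^{M_k}\subset A_k$ whose $(\tau_k/3)$-balls cover $A_k$. Property (i) is immediate from $d(x_{k,j},\mathscr{F}^{k-1})\ge\tilde\delta_k>0$ for $k\ge 1$. For property (ii), Proposition~\ref{prop.lowerbound.Rx} yields $R^*(x_{0,j})\ge R_0\ge r_0\ge \sigma_0$ at vertices (using $c^*\le 1$); $R^*(x_{k,j})\ge c^*\tilde\delta_k = \sigma_k$ for $1\le k\le n-2$ via \eqref{est.R*x2}; and on $\mathscr{F}^{n-1}\setminus\mathscr{F}^{n-2}$, \eqref{est.R*x} gives the stronger $R^*(x_{n-1,j})\ge \tilde\delta_{n-1}\ge \sigma_{n-1}$.

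The substantive step is (iii). Given $y\in\partial P$, I build a chain $z_0=y,z_1,\ldots,z_m$ as follows: let $k_i$ be the smallest index with $z_i\in\mathscr{F}^{k_i}$; stop if $z_i\in A_{k_i}$ (in particular $k_i=0$ always stops since $A_0=\mathscr{F}^0$), otherwise $d(z_i,\mathscr{F}^{k_i-1})<\tilde\delta_{k_i}$ and pick $z_{i+1}\in\mathscr{F}^{k_i-1}$ with $d(z_i,z_{i+1})<\tilde\delta_{k_i}$. The indices $k_i$ strictly decrease, so the chain terminates in at most $n$ steps. Because $k_i\ge k_{m-1}+(m-1-i)$, a geometric summation gives
\[
d(y,z_m)<\sum_{i=0}^{m-1}\tilde\delta_{k_i}\le \tilde\delta_{k_{m-1}}\cdot\frac{1}{1-c^*/32}\le \frac{c^*}{32-c^*}\tilde\delta_{k_m}\le \frac{16}{31}\tau_{k_m},
\]
where I used $\tilde\delta_{k_{m-1}}\le (c^*/32)\tilde\delta_{k_m}$ and $\tilde\delta_{k_m}=(16/c^*)\tau_{k_m}$. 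Since $z_m\in A_{k_m}$, picking $j_0$ with $d(z_m,x_{k_m,j_0})<\tau_{k_m}/3$ gives
\[
d(y,x_{k_m,j_0})<\Bigl(\frac{16}{31}+\frac{1}{3}\Bigr)\tau_{k_m}=\frac{79}{93}\tau_{k_m}<\tau_{k_m},
\]
so $y\in B(x_{k_m,j_0},\tau_{k_m})$. The finite overlap property is then automatic from the finiteness of the whole collection.

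The hard part will be the scale bookkeeping: calibrating things so that the star-shape condition $d(x_{k,j},\mathscr{F}^{k-1})\ge \sigma_k/c^*$ and the covering inequality $\sum_i\tilde\delta_{k_i}+\tau_{k_m}/3<\tau_{k_m}$ both hold at every center and for every chain. The $c^*/32$ ratio between consecutive $\tilde\delta$'s is what makes the telescoping sum dominated by $\tilde\delta_{k_{m-1}}\le(c^*/32)\tilde\delta_{k_m}$, leaving exactly the room needed for the final $(\tau_{k_m}/3)$ net resolution.
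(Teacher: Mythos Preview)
Your argument is correct and follows essentially the same strategy as the paper: place centers on the ``interior'' part of each stratum $\mathscr{F}^k$ (your $A_k$ coincides with the paper's $\mathscr{F}^k_{\rm int}=\mathscr{F}^k\setminus B(\mathscr{F}^{k-1},\tilde\delta_k)$), verify star-shapedness there via Proposition~\ref{prop.lowerbound.Rx}, and show that points not in $A_k$ are captured by lower-level balls. The only organizational difference is that the paper proceeds by induction on $k$, proving at each step that a full neighborhood $B(\mathscr{F}^k,(\tfrac{1}{32}c^*)^{k+1}r_0)$ is already covered by levels $0,\ldots,k$, whereas you run a descent chain from each boundary point and sum the jump lengths geometrically; both arguments exploit the same scale ratio $c^*/32$ between consecutive $\tilde\delta_k$ and arrive at the same cover.
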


\begin{proof}
    Let $x_{0,j}$ ($1\le j\le M_0$) be the vertices in $\mathscr{F}^0$. By \eqref{est.R*x3}, $B(x_{0,j}, c^* r_0) \cap P$ is star-shaped with respect to $x_{0,j}$ for each $j$. Let 
    \begin{equation}\label{eq.Fint1}
        \mathscr{F}^1_{\rm int} = \mathscr{F}^1 \setminus B(\mathscr{F}^0, \frac{1}{32}c^* r_0).
    \end{equation}
    Since $\mathscr{F}^1_{\rm int}$ is compact, we can find a finite number of balls $B(x_{1,j}, \frac{1}{32} (c^*)^2 r_0)$ (with finite overlaps) centered on $\mathscr{F}^1_{\rm int} \subset \mathscr{F}^1 \setminus \mathscr{F}^0$ such that
    \begin{equation}\label{eq.CoverFint1}
        \mathscr{F}^1_{\rm int} \subset \bigcup_{j=1}^{M_1} B(x_{1,j},  (\frac{1}{32} c^*)^2 r_0).
    \end{equation}
    Combining \eqref{eq.Fint1} and \eqref{eq.CoverFint1}, we have
    \begin{equation}
        \mathscr{F}^1 \subset B(\mathscr{F}^1,(\frac{1}{32} c^*)^2 r_0 ) \subset \bigcup_{k=0}^1 \bigcup_{j=1}^{M_k} B(x_{k,j}, 2(\frac{1}{32} c^*)^{k+1} r_0).
    \end{equation}
    We claim that $B(x_{1,j}, \frac{1}{32}(c^*)^2 r_0) \cap P$ is star-shaped with respect to $x_{1,j}$. Indeed, by \eqref{eq.Fint1}, $\text{dist}(x_{1,j}, \mathscr{F}^0) \ge \frac{1}{32}c^* r_0$. Then by \eqref{est.R*x2}, we have $R^*(x_{1,j}) \ge \frac{1}{32}(c^*)^2 r_0$. This means that $B(x_{1,j}, \frac{1}{32}(c^*)^2 r_0) \cap P$ is star-shaped with respect to $x_{1,j}$. Hence, we have proved (i) and (ii) for $k=1$.

    Now consider $k=2$. Let
    \begin{equation}
        \mathscr{F}^2_{\rm int} = \mathscr{F}^2 \setminus B(\mathscr{F}^1, (\frac{1}{32} c^*)^2 r_0).
    \end{equation}
    Since $\mathscr{F}^2_{\rm int}$ is compact, we can find finite number of balls $B(x_{2,j}, (\frac{1}{32} c^*)^2 c^* r_0)$ centered on $\mathscr{F}^2_{\rm int} \subset \mathscr{F}^2 \setminus \mathscr{F}^1$ such that
    \begin{equation}
        \mathscr{F}^2_{\rm int} \subset \bigcup_{j=1}^{M_2} B(x_{2,j}, (\frac{1}{32} c^*)^3 r_0).
    \end{equation}
    Then, doubling the radius of the balls, we have
    \begin{equation}
        \mathscr{F}^2 \subset B(\mathscr{F}^2, (\frac{1}{32} c^*)^3 r_0) \subset \bigcup_{k=0}^2 \bigcup_{j=1}^{M_k} B(x_{k,j}, 2(\frac{1}{32} c^*)^{k+1} r_0).
    \end{equation}
    By the same reason as before, we can show that $B(x_{2,j}, (\frac{1}{32} c^*)^2 c^* r_0) \cap P$ is star-shaped with respect to $x_{2,j}$.

    Repeating this process, we find a sequence of balls $B(x_{k,j}, (\frac{1}{32}c^*)^k c^* r_0)$ with $x_{k,j} \in \mathscr{F}^k \setminus \mathscr{F}^{k-1}$ such that $B(x_{k,j}, (\frac{1}{32}c^*)^k c^* r_0) \cap P$ is star-shaped with respect to $x_{k,j}$ and
    \begin{equation}\label{est.coverDP}
        \mathscr{F}^i \subset B(\mathscr{F}^i, (\frac{1}{32} c^*)^{i+1} r_0) \subset \bigcup_{k=0}^i \bigcup_{j=1}^{M_k} B(x_{k,j}, 2(\frac{1}{32} c^*)^{k+1} r_0).
    \end{equation}
    for every $i=1,2,\cdots, n-1$. The proves (i) and (ii) for every $k\le n-1$, while (iii) follows from \eqref{est.coverDP} with $i = n-1$ and the fact $\partial P = \mathscr{F}^{n-1}$.
\end{proof}

The following lemma shows that the MSR does not decrease rapidly if the center moves nontangentially from a boundary point to the interior points.
\begin{lemma}\label{lem.CofDI}
    Let $B_{r_{0}/2}(0) \cap P$ be given as in \eqref{LipGraph} (up to a rigid transformation)
    and $x\in B_{r_{0}/2}(0) \cap P$. Let $R^{**}(x):= R^*(x) \wedge \frac14 r_0$. Then for any $t\in (0,\frac12 R^{**}(x)]$, we have
    $R^*(x+te_n) \ge \frac12 R^{**}(x)$.
\end{lemma}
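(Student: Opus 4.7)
The plan is to show that $B_{\frac12 R^{**}(x)}(y)\cap P$ (specifically, the connected component containing $y$) is star-shaped with respect to $y:=x+te_n$, which by the definition of $R^*$ yields $R^*(y)\ge \tfrac12 R^{**}(x)$. The driving idea is a \emph{vertical shift} reduction: any segment from $y$ to a nearby point $w$ has the same horizontal projection as the segment from $x$ to $w$, but sits higher by exactly $(1-s)t\ge 0$. Combined with the Lipschitz graph description of $P$ inside the chart, this means that whenever the segment from $x$ lies above the graph, so does the segment from $y$.

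First I would verify that the whole ball $B_{R^{**}(x)/2}(y)$ remains inside the Lipschitz chart $B_{r_0}(0)$ from (\ref{LipGraph}). Since $|x|<r_0/2$, $t\le R^{**}(x)/2\le r_0/8$, and $|w-y|\le R^{**}(x)/2\le r_0/8$ for any $w$ in this ball, the triangle inequality yields $|w|\le 3r_0/4<r_0$. Hence $P\cap B_{R^{**}(x)/2}(y) = \{x^n>\phi(x')\}\cap B_{R^{**}(x)/2}(y)$. Moreover, $y^n = x^n+t\ge \phi(x')=\phi(y')$, so $y\in\overline P$ and $R^*(y)$ is well defined.

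Next, I would fix any $w\in B_{R^{**}(x)/2}(y)\cap P$ and compare the two segments $\gamma(s):=(1-s)y+sw$ and $\tilde\gamma(s):=(1-s)x+sw$ for $s\in[0,1]$. The identity $y'=x'$ gives $\gamma(s)'=\tilde\gamma(s)'$ and $\gamma(s)^n = \tilde\gamma(s)^n+(1-s)t$. The triangle inequality also gives $|w-x|\le R^{**}(x)\le R^*(x)$, so $w$ lies in the star-shaped component of $B_{R^*(x)}(x)\cap P$ adjacent to $x$ (which also contains $y$, since the vertical segment from $x$ to $y$ stays in $\overline P$ by the graph representation). Consequently $\tilde\gamma(s)\in\overline P$, i.e., $\tilde\gamma(s)^n\ge \phi(\tilde\gamma(s)')$, and the shift identity together with $(1-s)t\ge 0$ and $\gamma(s)'=\tilde\gamma(s)'$ yields $\gamma(s)^n\ge \phi(\gamma(s)')$, so $\gamma(s)\in\overline P$ for every $s\in[0,1]$. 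Since the segment also lies in the convex ball $B_{R^{**}(x)/2}(y)$, star-shapedness with respect to $y$ follows.

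The only real obstacle is bookkeeping on connected components: the star-shapedness from $x$ only directly applies to points in the component of $B_{R^*(x)}(x)\cap P$ adjacent to $x$, so one must justify that $w$ sits in that component. This is handled precisely by the vertical-segment step, which uses the Lipschitz graph representation to connect $x$ and $y$ inside $\overline P$. Once this is in place, the rest of the argument is the Lipschitz-graph analogue of the elementary convex-combination argument used for convex domains.
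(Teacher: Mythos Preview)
Your argument is correct and reaches the same conclusion, but it is organized around the \emph{segment} characterization of star-shapedness rather than the \emph{outward-normal} characterization used in the paper. The paper simply checks that for every boundary point $z\in B(x+te_n,\tfrac12 R^{**}(x))\cap\partial P\subset B(x,R^{**}(x))\cap\partial P$ one has
\[
n(z)\cdot\bigl(z-(x+te_n)\bigr)=n(z)\cdot(z-x)-t\,n(z)\cdot e_n\ge 0,
\]
using star-shapedness at $x$ for the first term and the graph representation (which forces $n(z)\cdot e_n\le 0$) for the second. Your vertical-shift identity $\gamma(s)^n=\tilde\gamma(s)^n+(1-s)t$ is precisely the integrated form of this same inequality: both proofs exploit that the outward normal to an epigraph never points upward. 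The normal-vector computation is a one-liner, while your version makes the underlying picture (lifting a segment keeps it above the graph) more transparent; neither approach is materially shorter or more general.

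One comment on your bookkeeping: the vertical segment from $x$ to $y$ indeed places $y$ in the star-shaped component $\Omega_x$ of $B_{R^{**}(x)}(x)\cap P$ adjacent to $x$, but what you actually need is that $w$ lies there. This follows once you observe that for the purpose of bounding $R^*(y)$ you only need to treat $w$ in the connected component of $B_{R^{**}(x)/2}(y)\cap P$ containing $y$; since $B_{R^{**}(x)/2}(y)\subset B_{R^{**}(x)}(x)$, that component sits inside the component of $B_{R^{**}(x)}(x)\cap P$ containing $y$, which is $\Omega_x$. Stating this explicitly would close the loop in your final paragraph.
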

\begin{proof}
    Since $R^{**}(x) \le R^*(x)$, $B(x, R^{**}(x)) \cap P$ is star-shaped with respect to $x$. This implies for any $y\in B(x, R^{**}(x)) \cap \partial P$, 
    \begin{equation}
        n(y)\cdot (y-x) \ge 0.
    \end{equation}
    Note that for any $t\in (0,\frac12 R^{**}(x)]$, $B(x+te_n, \frac12 R^{**}(x)) \subset B(x, R^{**}(x))$. Thus, for any $y\in B(x+te_n, \frac12 R^{**}(x)) \cap \partial P$,
    \begin{equation}
        n(y) \cdot (y - (x+te_n)) = n(y)\cdot (y-x) - t n(y)\cdot e_n \ge - t n(y)\cdot e_n \ge 0,
    \end{equation}
    where the last estimate uses the fact $n(y)\cdot e_n \le 0$. This implies that $B(x+te_n, \frac12 R^{**}(x))$ is star-shaped with respect to $x+te_n$ and thus $R^*(x+te_n) \ge \frac12 R^{**}(x)$.
\end{proof}


\section{Uniform bound of doubling index}\label{sec.Unif}
In this section, we prove a crucial lemma that is used to control the doubling index in a star-shaped subdomain of a polytope.

\begin{lemma}\label{lem.unifDI}
    Assume that $\Omega = B_{r_0}(0) \cap P$ is star-shaped with respect to $0$. Let $u$ be a harmonic function in $\Omega$ with $u = 0$ on $B_{r_0}(0) \cap \partial P$. Then there exist $c_1 = c_1(L)>0$ such that for any $x\in B_{r_0/8}(0) \cap P$ and any $0<r< c_1 \{ R^*(x) \wedge \frac14 r_0 \}$,
    \begin{equation}\label{est.unifdoubling}
        N_u(x, r) \le C N_u(0, \frac12 r_0).
    \end{equation}
\end{lemma}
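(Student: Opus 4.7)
I will bound $N_u(x,r)$ by $N_u(0,r_0/2)$ by applying Lemma~\ref{lem.PropagationOfDB} along a curve $\gamma$ from $0$ to $x$, combined with Lemma~\ref{lem.monotonicityDL} to manipulate scales and Lemma~\ref{lem.CofDI} to control the maximum star-shape radius (MSR) along $\gamma$.

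\emph{Reduction to the maximal scale.} Since $B(x,R^*(x))\cap\Omega$ is star-shaped with respect to $x$, Lemma~\ref{lem.monotonicityDL} gives $N_u(x,r)\le N_u(x,r_*)$ for any $r\le r_*<R^*(x)/2$. Choosing $r_* = c_1 R^{**}(x)$ with $c_1<1/2$, it suffices to bound $N_u(x,r_*)$.

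\emph{Construction of the curve.} Using the graph representation of $\partial P\cap B_{r_0}(0)$ from Definition~\ref{def.Lip} in the lifting direction $e_n$, I build $\gamma$ as a concatenation of two pieces. First, a nontangential lift away from $x$, built inductively by $y_0=x$ and $y_{j+1}=y_j+(R^{**}(y_j)/2)e_n$, continued until reaching a ``bulk'' point $y_m$ with $R^*(y_m)\gtrsim r_0$; by Lemma~\ref{lem.CofDI} one has $R^*(y_{j+1})\ge R^{**}(y_j)/2$ at every step of this lift, and in the Lipschitz geometry one in fact expects $R^*$ to grow linearly in the total lift height. Second, a straight segment from $y_m$ back to $0$, which lies in $\overline\Omega$ by the star-shapedness of $\Omega$ with respect to $0$ and along which the MSR stays comparable to $r_0$.

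\emph{Iterated propagation.} I apply Lemma~\ref{lem.PropagationOfDB} to each piece of $\gamma$ at the locally matched scale (comparable to the local MSR), and use the monotonicity of Lemma~\ref{lem.monotonicityDL} at each intermediate $y_j$ to enlarge the scale without multiplicative cost. Chaining these estimates along the lift and finishing with a propagation at the bulk scale (from $0$ to $y_m$ using the straight segment in the bulk) should yield $N_u(x,r_*)\le C\, N_u(0,r_0/2)$, whence the claim.

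\emph{Main obstacle.} The central difficulty is to keep the compounded multiplicative constant bounded. A naive chain produces $C^m$ with $m\sim\log(r_0/R^{**}(x))$, which is not uniform. The resolution should exploit the finite face hierarchy of $P$ via Proposition~\ref{prop.lowerbound.Rx}: the MSR at any point is controlled from below by the distance to a face of codimension at least $2$, and each nontangential lift of length comparable to the local MSR effectively escapes the influence of one stratum of faces. Hence only $O(n)$ essentially distinct scale levels are encountered, giving a uniform final constant depending only on $L$, $P$, and $n$. The smallness of $c_1=c_1(L)$ is chosen to guarantee that, throughout the lift, the hypotheses of Lemma~\ref{lem.PropagationOfDB} are verified with constants $C_1,C_2$ bounded in terms of $L$ alone, namely that $B(\gamma(s),4r_*)\cap\Omega$ is star-shaped with respect to $\gamma(s)$ and contained in $\Omega\cap B_{r_0\wedge C_1 r_*}(0)$.
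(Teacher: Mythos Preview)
Your plan correctly identifies all the tools and the central obstacle, but the proposed resolution of the $C^m$ blowup does not work for the vertical lift you describe. The estimates of Proposition~\ref{prop.lowerbound.Rx} that encode the face hierarchy, namely \eqref{est.R*x2} and \eqref{est.R*x3}, apply only to points \emph{on} the boundary strata $\mathscr{F}^{n-i}\setminus\mathscr{F}^{n-i-1}$; for your lifted points $y_j=x+t_je_n$, which lie in the interior of $P$, the only available bound is \eqref{est.R*x}, i.e.\ $R^*(y_j)\ge d(y_j,\mathscr{F}^{n-2})$. Since $\mathscr{F}^{n-2}\subset\partial P$ is on the Lipschitz graph, a vertical lift by $t$ increases this distance only by $O(t)$, so $R^*(y_j)$ grows merely geometrically along your sequence, and you genuinely need $m\sim\log(r_0/R^{**}(x))$ applications of Lemma~\ref{lem.PropagationOfDB}, each contributing a fixed multiplicative factor. (Concretely: near a single nonconvex vertex in two dimensions there is only one nontrivial stratum, yet your lift from a point at distance $\epsilon$ from the vertex still requires $\sim\log(r_0/\epsilon)$ steps.) Your sentence ``each nontangential lift of length comparable to the local MSR effectively escapes the influence of one stratum of faces'' is therefore not correct: a lift of size $R^{**}(y_j)$ doubles the distance to $\mathscr{F}^{n-2}$, but does not jump to the next stratum scale.

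The paper's proof addresses exactly this point by abandoning the vertical lift once $R^*(x)<cr_0$ and instead moving \emph{horizontally along the boundary}: it sets $z_{n-1}=\pi(x)$ and defines $z_{n-j}$ inductively as the nearest point of $\mathscr{F}^{n-j}$ to $z_{n-j+1}$. Because each $z_{n-j}$ lies on $\mathscr{F}^{n-j}\setminus\mathscr{F}^{n-j-1}$, the stratified estimate \eqref{est.R*x2} gives $R^*(z_{n-j})\ge c^*|z_{n-j}-z_{n-j-1}|$. A stopping-time argument then selects at most $n$ indices $k_1<k_2<\dots$ at which the MSR jumps by a definite factor relative to both the accumulated displacement and the previous MSR; between consecutive selected points one applies Lemma~\ref{lem.PropagationOfDB} once with bounded $C_1,C_2$. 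The total number of propagations is thus at most $n$, and the face hierarchy is used in an essential way that your interior lift cannot access. Your construction \emph{does} coincide with the paper's argument in the easy case $R^{**}(x)\gtrsim r_0$ (a single lift, a horizontal segment, and a descent), but a different mechanism is required when $R^*(x)\ll r_0$.
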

\begin{proof}
    Without loss of generality, we assume that $B_{r_0}(0) \cap P$ is given as in \eqref{LipGraph} above a Lipschitz graph.
    Let $c \le \frac14$ be a small number. Fix $x\in B_{r_0/8}(0) \cap P$. If $ c r_0 \le R^{**}(x) := R^*(x) \wedge \frac14 r_0$, then by Lemma \ref{lem.CofDI}, for any $0<t\le \frac12 R^{**}(x)$,
    \begin{equation}
        R^*(x+ t e_n) \ge \frac12 R^{**}(x).
    \end{equation}
    Since $B_{r_0}(0) \cap P$ is star-shaped, then by Lemma \ref{lem.CofDI} again, for any $0<t\le \frac12 r_0$,
    \begin{equation}
        R^*(0+t e_n) \ge \frac12 r_0.
    \end{equation}
    Recall that $B_{r_0}(0) \cap P$ is star-shaped with respect to the origin, then the line segment connecting $x$ and $0$ is contained in $B_{r_0}(0) \cap P$ (above the boundary graph). As a simple geometric observation, the line segment connecting $x+\frac12 R^{**}(x) e_n$ to $0+\frac12 R^{**}(x) e_n$ has a distance at least $\frac12 \ell_0 R^{**}(x)$ away from the boundary, where
    \begin{equation}
        \ell_0 = \frac{1}{\sqrt{1+L^2}}.
    \end{equation}
    Combining this line segment with the vertical line segments from $x$ to $x+\frac12 R^{**}(x) e_n$ and from $0+\frac12 R^{**}(x) e_n$ to $0$, we obtain a Lipschitz curve connecting $x$ and $0 $. Moreover, the Lipschitz curve is contained in $B_{r_0}(0) \cap P$ and $B_{\frac12 \ell_0 R^{**}(x)}(y) \cap P$ is star-shaped for any $y$ on the Lipschitz curve. Also the length of the curve is bounded by $Cr_0 \le C_1 R^{**}(x)$.
    Then by Lemma \ref{lem.PropagationOfDB}, we have
    \begin{equation}
        N_u(x, \frac18 \ell_0 R^{**}(x)) \le C N_u(0, \frac12 r_0),
    \end{equation}
    where $C$ depends on $c$ and $L$. If $c r_0 \le R^*(x) \le \frac14 r_0$, then the above argument also implies $N_u(x, \frac18 \ell_0 R^*(x)) \le C N_u(0, \frac12 r_0)$. This typical argument will be applied repeatedly below.

    Now we consider the case $R^*(x) < c r_0$ with sufficiently small $c >0$.

    Let $\pi(x) \in \partial P$ be the vertical projection of $x$ onto $\partial P$. Thus $x = \pi(x) + e_n |\pi(x) - x|$. We set $z_{n-1} = \pi(x)$. Let $z_{n-2}$ be the closest point  to $z_{n-1}$ from $\mathscr{F}^{n-2} \cap B_{r_0/2}(0)$.
    In general, let $z_{n-j}$ be the closest point to $z_{n-j+1}$ from $\mathscr{F}^{n-j} \cap B_{r_0/2}(0)$, until either one of the following two cases happens.
    
    Case 1: for some $2\le j_0\le n$, $z_{n-j_0}$ does not exist in $\mathscr{F}^{n-j_0} \cap B_{r_0/2}(0)$, i.e., $\mathscr{F}^{n-j_0} \cap B_{r_0/2}(0) =\emptyset$.
    
    Case 2: all $z_{n-j}$ exist for $2\le j\le n$.
    
    Clearly, if Case 1 does not happen, then Case 2 must happen. 
    
    For case 1, we claim that there exists $c' >0$ and some $j$ with $2\le j< j_0$ such that $R^*(z_{n-j}) \ge c' r_0$.   
    Actually, if $z_{n-j_0+1} \in B_{r_0/4}(0)$, then $R^*(z_{n-j_0+1}) > \frac14 c^* r_0$ due to \eqref{est.R*x2} and the fact that $z_{n-j_0}$ does not exist in $B_{r_0/2}(0)$. Now we consider the situation $z_{n-j_0+1} \notin B_{r_0/4}(0)$, then
    \begin{equation}
        \frac18 r_0 < |z_{n-1} - z_{n-j_0+1}| \le \sum_{j=1}^{j_0-2} |z_{n-j} - z_{n-j-1}|.
    \end{equation}
    The pigeonhole principle implies that there exists at least one $j$ with $1\le j\le j_0-2$ such that
    \begin{equation}
        |z_{n-j} - z_{n-j-1}| \ge \frac{1}{8n} r_0.
    \end{equation}
    Consequently, by \eqref{est.R*x2}, we have $R^*(z_{n-j}) \ge c' r_0$.

    For Case 2, thanks to \eqref{est.R*x3}, we also have $R^*(z_0) \ge c' r_0$ for some possibly different $c'>0$. Combining both cases, without loss of generality, we may assume that the integer $j_s$ with $1\le j_s \le j_0$ is the smallest integer such that $R^*(z_{n-j_s}) \ge c' r_0$.

    Next, we will develop a procedure to bound the doubling index through the sequence of points $z_{n-j}$ with a stopping time before reaching $j = j_s$. Let $1\le k_1\le j_s$ be the smallest integer such that
    \begin{equation}\label{cond.k1}
        R^*(z_{n-k_1}) > 32\ell_0^{-1} |z_{n-k_1} - x| \quad \text{and} \quad R^*(z_{n-k_1}) > 32 \ell_0^{-1} R^*(x).
    \end{equation}
    We temporarily assume that such $k_1$ exists.
    Then we show that there exists $C>0$ such that
    \begin{equation}\label{est.N.n-k1}
        N(x, \frac18 \ell_0 R^*(x)) \le CN(z_{n-k_1}, \frac18 \ell_0 R^*(z_{n-k_1})).
    \end{equation}
    In fact, by the selection of $k_1$ and \eqref{est.R*x2}, for each $1\le j < k_1$
    \begin{equation}
    \begin{aligned}
        c^*|z_{n-j-1} - x| - c^*|z_{n-j} - x| & \le c^*|z_{n-j} - z_{n-j-1}| \\
        & \le R^*(z_{n-j}) \\
        & \le 32\ell_0^{-1} |z_{n-j} - x| + 32\ell_0^{-1} R^*(x).
    \end{aligned}
    \end{equation}
    Thus
    \begin{equation}
        |z_{n-j-1} - x| \le \frac{32\ell_0^{-1}+c^*}{c^*} |z_{n-j} - x| + \frac{32 \ell_0^{-1}}{c^*} R^*(x).
    \end{equation}
    It follows by an iteration that
    \begin{equation}\label{est.znk1-x}
        |z_{n-k_1}-x| \le C(n,L) R^*(x).
    \end{equation}
    By this and the assumption \eqref{cond.k1}, and a similar argument as before, we can find a Lipschitz curve contained in $B(z_{n-k_1}, |x-z_{n-k_1}| + R^*(x)) \cap P$ connecting $x$ and $z_{n-k_1}$ whose length is comparable to $|x - z_{n-k_1}| + R^*(x) \lesssim R^*(x)$ (by \eqref{est.znk1-x}). Moreover, $P\cap B(y, \frac12 \ell_0 R^*(x))$ is star-shaped for all $y$ on the Lipschitz curve. Thus \eqref{est.N.n-k1} follows from Lemma \ref{lem.PropagationOfDB}, provided that $k_1$ exists. We point out that the particular constant $32\ell_0^{-1}$ in \eqref{cond.k1} guarantees that the condition (iii) in Lemma \ref{lem.PropagationOfDB} is satisfied with $R = \frac14 \ell_0 R^*(z_{n-k_1})$ and $r = \frac18 \ell_0 R^*(x)$.

    On the other hand, if $k_1$ does not exist, then we can show $R^*(x) \ge c r_0$ for some small $c>0$. Actually, if $k_1$ does not exist, then for any $1 \le j\le j_s$, we have
    \begin{equation}\label{cond.Rz}
        R^*(z_{n-j}) \le 32\ell_0^{-1} |z_{n-j} - x| \quad \text{or} \quad R^*(z_{n-j}) \le 32 \ell_0^{-1} R^*(x).
    \end{equation}
    Consider $j=1$. A simple geometric observation implies
    \begin{equation}\label{est.Rz1-x}
        |z_{n-1} - x| = |\pi(x) - x| \le C R^*(x).
    \end{equation}
    Thus, either case in \eqref{cond.Rz} with $j=1$ gives
    \begin{equation}\label{est.Rz.j=1}
        R^*(z_{n-1}) \le C R^*(x).
    \end{equation}
    Then consider $j=2$ in \eqref{cond.Rz}. By \eqref{est.Rz1-x}, \eqref{est.Rz.j=1} and \eqref{est.R*x2}, we have
    \begin{equation}
        |z_{n-2} - x| \le |z_{n-1} - x| + |z_{n-2} - z_{n-1}| \le CR^*(x) + \frac{1}{c^*} R^*(z_{n-1}) \le CR^*(x).
    \end{equation}
    Hence, \eqref{cond.Rz} with $j=2$ gives
    \begin{equation}
        R^*(z_{n-2}) \le CR^*(x).
    \end{equation}
    Repeating this argument until $j = j_s$, we get $R^*(z_{n-j_s}) \le CR^*(x)$ for some large $C>0$. But since we know by our construction that $R^*(z_{n-j_s}) \ge c'r_0$. Then we must have $R^*(x) \ge cr_0$ for some small $c>0$. This situation has been solved at the beginning of the proof.

    We only need to continue with the case when $k_1$ exists. If $k_1 = j_s$, we stop since $R^*(z_{n-k_1}) \ge c' r_0$. Otherwise, let $k_2$ with $k_1< k_2 \le j_s$ be the smallest integer such that
    \begin{equation}
        R^*(z_{n-k_2}) > 32\ell_0^{-1} |z_{n-k_2} - z_{n-k_1}| \quad \text{and} \quad R^*(z_{n-k_2}) > 32 \ell_0^{-1} R^*(z_{n-k_1}).
    \end{equation}
    Again, if $k_2$ does not exist, we can show by a similar argument that $R^*(z_{n-k_2}) \ge cr_0$. In this case, we have
    \begin{equation}
        N_u(z_{n-k_1}, \frac18 \ell_0 R^*(z_{n-k_1})) \le CN_u(0,\frac12 r_0),
    \end{equation}
    which together with \eqref{est.N.n-k1} gives \eqref{est.unifdoubling}.

    It remains to consider the case when $k_2$ exists. Applying a similar argument as before, we have
    \begin{equation}
        N_u(z_{n-k_1},  \frac18 \ell_0 R^*(z_{n-k_1})) \le C N_u(z_{n-k_2}, \frac18 \ell_0 R^*(z_{n-k_2})).
    \end{equation}
    Repeating this process until $k_{m+1}$ does not exist or $k_m = j_s$, we get a sequence of $k_i$, $1\le i\le m$, with $R^*(z_{n-k_m}) \ge cr_0$. Moreover, it holds for each $1\le i\le m-1$
    \begin{equation}
        N_u(z_{n-k_i}, \frac18 \ell_0 R^*(z_{n-k_i})) \le C N_u(z_{n-k_{i+1}}, \frac18 \ell_0 R^*(z_{n-k_{i+1}})),
    \end{equation}
    and
    \begin{equation}
        N_u(z_{n-k_m}, \frac18 \ell_0 R^*(z_{n-k_m})) \le CN_u(0,\frac12 r_0).
    \end{equation}
    Finally, the last two estimates combined with \eqref{est.N.n-k1} imply the desired estimate.
    \end{proof}

\section{Estimates of nodal sets}\label{sec.nodal}
\subsection{Nodal sets of harmonic functions } We first recall the well-known interior estimate of nodal sets over a flat boundary \cite{DF88,L91}.

\begin{lemma}\label{lem.flatbdary} 
Let $x\in \{ y = (y',y^n): y^n \ge 0  \}$.
Let $u$ be a non-zero harmonic function in $B_{8r}(x) \cap \{ y^n > 0 \}$ for some $r>0$ and $u = 0$ on $B_{8r}(x) \cap \{ y^n = 0 \}$.  There exists $C$ depending only on $n$ such that 
\begin{equation}
\mathbb{H}^{n-1}(Z(u)\cap B_r(x))\le C(N_{u}(x,4r)+1) r^{n-1}.
\end{equation}
\end{lemma}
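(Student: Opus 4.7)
The plan is to perform an odd reflection across the flat boundary $\{y^n=0\}$ and thereby reduce the estimate to the classical interior Donnelly-Fefferman nodal set bound for a harmonic function in a ball. Set $\tilde{x}=(x',0)$, and define $\tilde{u}(y',y^n)=u(y',y^n)$ for $y^n\ge 0$ and $\tilde{u}(y',y^n)=-u(y',-y^n)$ for $y^n<0$. Since $u=0$ on $B_{8r}(x)\cap\{y^n=0\}$, the reflection principle gives that $\tilde{u}$ extends harmonically to the symmetric ball $B_{8r-x^n}(\tilde{x})$, and clearly $Z(u)\cap B_r(x)\subset Z(\tilde{u})\cap B_r(x)$. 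I would then split into two regimes based on the height $x^n$.

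If $x^n\ge 2r$, then $B_{2r}(x)\subset\{y^n>0\}$, so $u$ itself is a harmonic function on the full ball $B_{2r}(x)$, and the classical interior nodal set bound of Donnelly-Fefferman (\cite{DF88,L91}) yields $\mathbb{H}^{n-1}(Z(u)\cap B_r(x))\le C(N_u(x,r)+1)r^{n-1}$. Since $B_{8r}(x)\cap\{y^n>0\}$ is convex (hence star-shaped with respect to $x$), Lemma \ref{lem.monotonicityDL} gives $N_u(x,r)\le N_u(x,4r)$, completing this case.

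If instead $x^n<2r$, then $|\tilde{x}-x|<2r$, so $B_r(x)\subset B_{3r}(\tilde{x})$ and $\tilde{u}$ is harmonic on $B_{6r}(\tilde{x})$. Applying the interior estimate to $\tilde{u}$ on $B_{3r}(\tilde{x})$ gives
\begin{equation*}
\mathbb{H}^{n-1}(Z(u)\cap B_r(x))\le C(N_{\tilde{u}}(\tilde{x},3r)+1)r^{n-1}.
\end{equation*}
To bound $N_{\tilde{u}}(\tilde{x},3r)$ by $N_u(x,4r)$, I would use the oddness of $\tilde{u}$ to write $\int_{B_\rho(\tilde{x})}\tilde{u}^2=2\int_{B_\rho(\tilde{x})\cap\{y^n>0\}}u^2$, and combine with the set inclusions $B_{6r}(\tilde{x})\cap\{y^n>0\}\subset B_{8r}(x)\cap\{y^n>0\}$ and $B_r(x)\cap\{y^n>0\}\subset B_{3r}(\tilde{x})\cap\{y^n>0\}$ (both valid since $x^n<2r$). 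This reduces the task to bounding $\log_2\bigl(\int_{B_{8r}(x)\cap\{y^n>0\}}u^2\big/\int_{B_r(x)\cap\{y^n>0\}}u^2\bigr)$, which by iterating Lemma \ref{lem.monotonicityDL} across the dyadic scales $r,2r,4r,8r$ is $\le 3N_u(x,4r)$.

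The main (minor) obstacle is this last bookkeeping step comparing the doubling index at the shifted center $\tilde{x}$ with the one at $x$; thanks to the convexity (hence star-shapedness) of the half-ball intersection with respect to $x$, the dyadic iteration of the monotonicity lemma applies cleanly, and everything else is a routine application of the interior estimate to the reflected function.
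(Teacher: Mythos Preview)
The paper does not supply a proof of this lemma; it is quoted as a well-known consequence of the interior estimates of Donnelly--Fefferman and Lin, with a citation to \cite{DF88,L91}. Your odd-reflection argument is exactly the standard reduction one has in mind when citing those references for the flat-boundary case, and it is correct as written: the reflection produces a harmonic function on a full ball, the interior nodal bound applies, and the convexity of the half-ball makes Lemma~\ref{lem.monotonicityDL} available for the dyadic comparison of doubling indices. The case split at $x^n=2r$ and the bookkeeping relating $N_{\tilde u}(\tilde x,3r)$ to $3N_u(x,4r)$ via the inclusions $B_{6r}(\tilde x)\cap\{y^n>0\}\subset B_{8r}(x)\cap\{y^n>0\}$ and $B_r(x)\cap\{y^n>0\}\subset B_{3r}(\tilde x)\cap\{y^n>0\}$ are both fine. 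In short, there is nothing to compare---you have supplied the proof that the paper omits.
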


With the above lemma, we can obtain the estimate of nodal sets for harmonic functions in a star-shaped subdomain of a polytope.

\begin{theorem}\label{thm.local}
    Under the same assumption of Lemma \ref{lem.unifDI}, we have
    \begin{equation}\label{est.ZuinBalls}
        \mathbb{H}^{n-1}(z(u) \cap P \cap B(0,\frac18 r_0)) \le C(N_u(0,\frac12 r_0) + 1) r_0^{n-1}.
    \end{equation}
\end{theorem}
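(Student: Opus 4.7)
I would cover $P\cap B(0,\tfrac{1}{8} r_{0})$ by a Whitney-type family of balls at scale proportional to $d(\cdot,\mathscr{F}^{n-2})$, apply on each ball either Lemma~\ref{lem.flatbdary} after a rigid motion or the standard interior Donnelly--Fefferman estimate, with the doubling index on each ball controlled by Lemma~\ref{lem.unifDI}, and finally sum the contributions using the codimension-two integrability of $d(\cdot,\mathscr{F}^{n-2})^{-1}$.

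\textbf{Whitney cover and geometric claim.} For each $x\in P\cap B(0,\tfrac{1}{8} r_{0})$ set
\[
r(x)=c_{0}\min\{d(x,\mathscr{F}^{n-2}),\,r_{0}\},
\]
where $c_{0}=c_{0}(P,n)>0$ is chosen small enough that $c_{0}\le c^{*}/64$ and $32 r(x)\le c_{1}(R^{*}(x)\wedge \tfrac{1}{4}r_{0})$; the latter is possible because Proposition~\ref{prop.lowerbound.Rx} gives $R^{*}(x)\ge d(x,\mathscr{F}^{n-2})$. Vitali's lemma then supplies a countable subfamily $\{B(x_{i},r(x_{i}))\}_{i}$ of pairwise disjoint balls whose fivefold dilates still cover $P\cap B(0,\tfrac{1}{8}r_{0})$. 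The main geometric claim to verify is that, with such $c_{0}$, the enlarged ball $B(x_{i},16 r(x_{i}))$ meets $\partial P$ in at most one $(n-1)$-face $F_{x_{i}}$, and $B(x_{i},16 r(x_{i}))\cap P$ is either the whole ball or the half-ball lying on the $P$-side of the hyperplane containing $F_{x_{i}}$. Indeed, $16 r(x_{i})<d(x_{i},\mathscr{F}^{n-2})$ keeps the ball away from $\mathscr{F}^{n-2}$; letting $y$ be the orthogonal projection of $x_{i}$ onto the nearest face $F_{x_{i}}$, Lemma~\ref{lem.distF} gives $d(y,G)\ge c^{*}d(y,\partial F_{x_{i}})\ge c^{*}(d(x_{i},\mathscr{F}^{n-2})-|x_{i}-y|)$ for every $(n-1)$-face $G\ne F_{x_{i}}$, and combined with $|x_{i}-y|\le 16 r(x_{i})$ this forces $d(x_{i},G)\ge \tfrac{1}{2}c^{*}d(x_{i},\mathscr{F}^{n-2})\gg 16 r(x_{i})$; the same lower bound on $d(y,\partial F_{x_{i}})$ shows that the equatorial disk $B(x_{i},16 r(x_{i}))\cap(\text{hyperplane of }F_{x_{i}})$ sits inside $F_{x_{i}}$.

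\textbf{Local estimate and summation.} On each ball, Lemma~\ref{lem.flatbdary} after a rigid motion sending the hyperplane of $F_{x_{i}}$ to $\{y^{n}=0\}$ (or its purely interior counterpart, when $B(x_{i},8 r(x_{i}))\subset P$) yields
\[
\mathbb{H}^{n-1}(Z(u)\cap B(x_{i},5 r(x_{i}))) \le C\bigl(N_{u}(x_{i},32 r(x_{i}))+1\bigr)r(x_{i})^{n-1}.
\]
By Lemma~\ref{lem.unifDI} and the choice of $c_{0}$, $N_{u}(x_{i},32 r(x_{i}))\le C N_{u}(0,\tfrac{1}{2}r_{0})$. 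Summing over $i$ and using the disjointness of the $B(x_{i},r(x_{i}))$ together with $r(x_{i})\le c_{0}d(x_{i},\mathscr{F}^{n-2})$,
\[
\sum_{i} r(x_{i})^{n-1} \le C\sum_{i}\frac{|B(x_{i},r(x_{i}))|}{d(x_{i},\mathscr{F}^{n-2})} \le C\int_{B(0,r_{0})}\frac{dx}{d(x,\mathscr{F}^{n-2})} \le C r_{0}^{n-1},
\]
where the last step is a layer-cake computation based on the codimension-two estimate $|\{x\in B(0,r_{0}):d(x,\mathscr{F}^{n-2})<\varepsilon\}|\le C\varepsilon^{2} r_{0}^{n-2}$. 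Combining the two displays gives \eqref{est.ZuinBalls}.

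\textbf{Main obstacle.} The delicate point is the geometric claim on the one-face (or no-face) structure of $B(x_{i},16 r(x_{i}))\cap P$, which has to be justified uniformly whether $x_{i}$ lies deep inside $P$ or close to a facet, ridge, or vertex; this is exactly where the sharp constant $c^{*}$ from Lemma~\ref{lem.distF} is needed. Once that claim is in hand, the other ingredients---the uniform doubling index bound of Lemma~\ref{lem.unifDI} and the codimension-two integrability of $d(\cdot,\mathscr{F}^{n-2})^{-1}$---combine in a routine way to complete the argument.
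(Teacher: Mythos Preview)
Your proposal is correct and follows essentially the same strategy as the paper's proof: cover $P\cap B(0,\tfrac18 r_0)$ by balls whose radii are comparable to $d(\cdot,\mathscr{F}^{n-2})$, observe that each such ball meets $\partial P$ in at most one flat facet so that Lemma~\ref{lem.flatbdary} applies, bound each local doubling index by Lemma~\ref{lem.unifDI}, and sum using the codimension-two geometry of $\mathscr{F}^{n-2}$. The only cosmetic difference is packaging---the paper groups points into dyadic annuli $E_k=\{2^{-k}r_0<d(\cdot,\mathscr{F}^{n-2})\le 2^{-k+1}r_0\}$ and counts at most $C\,2^{(n-2)k}$ balls per annulus, whereas you use a Vitali cover and the integral $\int d(x,\mathscr{F}^{n-2})^{-1}\,dx$; the paper also skips your detailed invocation of Lemma~\ref{lem.distF}, noting directly that a ball disjoint from $\mathscr{F}^{n-2}$ can meet $\partial P$ only in a single flat piece.
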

\begin{proof}
    Let us define $E_{k}$ as follows:
    \begin{equation}\label{4.51}
    E_{k}=\Big\{ y\in B(0,\frac18 r_0) \cap P :2^{-k} r_0 < d(y,\mathscr{F}^{n-2}) \le 2^{-k+1}r_0 \Big\}, \quad k=1,2,3, \cdots
    \end{equation}
    Then
    \begin{equation}
        B(0,\frac18 r_0) \cap P = \bigcup_{k=1}^\infty E_k.
    \end{equation}

    Fix $k\ge 1$. By \eqref{est.R*x}, for each $x\in E_k$, $R^*(x) \ge 2^{-k} r_0$. We can cover $E_k$ by a finite number of balls $B(x_j, 2^{-k-2}r_0)$ with bounded overlaps such that $x_j \in E_k$ and
    \begin{equation}
        E_k \subset \bigcup_{j} B(x_j, c_1 2^{-k-5} r_0) \cap P,
    \end{equation}
    where $c_1$ is the constant given in Lemma \ref{lem.unifDI}. The number of these balls is bounded by $ C 2^{(n-2)k}$ since $\mathscr{F}^{n-2}$ is $(n-2)$-dimensional.
    Since $d(x_j, \mathscr{F}^{n-2}) \ge 2^{-k}r_0$, $B(x_j, 2^{-k-2} r_0)$ does not intersect with $\mathscr{F}^{n-2}$. Thus $B(x_j, 2^{-k-2} r_0) \cap \partial P$ is an empty set or a portion of a flat plane. Hence, we can apply Lemma \ref{lem.flatbdary} to get
    \begin{equation}
        \mathbb{H}^{n-1}(Z(u)\cap B(x_j, c_1 2^{-k-5} r_0) \cap P) \le C(N_u(x_j, c_1 2^{-k-3}r_0) + 1) (2^{-k-5} r_0)^{n-1}.
    \end{equation}
    Since $R^*(x_j) \ge 2^{-k} r_0$, then by Lemma \ref{lem.unifDI},
    \begin{equation}
        N_u(x_j, c_1 2^{-k-3}r_0) \le N_u(x_j, c_1 (R^*(x_j) \wedge \frac14 r_0)) \le CN_u(0, \frac12 r_0).
    \end{equation}
    It follows that
    \begin{equation}
    \begin{aligned}
        \mathbb{H}^{n-1}(Z(u)\cap E_k) & \le \sum_{j} \mathbb{H}^{n-1}( Z(u) \cap B(x_j, c_1 2^{-k-5} r_0) \cap P ) \\
        & \le C \sum_{j} ( N_u(x_j, c_1 2^{-k-3} r_0) + 1) 2^{-(k+5)(n-1)} r_0^{n-1} \\
        & \le C 2^{(n-2)k} ( N_u(0, \frac12 r_0) + 1) 2^{-(k+5)(n-1)} r_0^{n-1} \\
        & \le C (N_u(0, \frac12 r_0) + 1) 2^{-k} r_0^{n-1}.
    \end{aligned}
    \end{equation}
    Summing over $k\ge 1$, we obtain \eqref{est.ZuinBalls}.
    \end{proof}

\subsection{Dirichlet eigenfunctions}
Let $P$ be a bounded Lipschitz polytope. Let $\varphi_{\lambda}$ be the Dirichlet eigenfunction of $-\Delta$ corresponding to the eigenvalue $\lambda>0$, namely, $-\Delta\varphi_{\lambda}=\lambda \varphi_{\lambda}$ in $P$ and $\varphi_{\lambda} = 0$ on $\partial P$. Let 
\begin{equation}\label{eq.lifting}
u_{\lambda}(x,t)= e^{t\sqrt{\lambda}} \varphi_{\lambda}(x).
\end{equation}
Then $u_{\lambda}$ is harmonic in $\widetilde{P}:=P \times \mathbb{R}$ and $u_\lambda = 0$ on $\partial \widetilde{P} = \partial P \times \R$.

The next lemma gives an upper bound of the doubling index for $u_\lambda$ at large scales.

\begin{lemma} \label{lem.DLintP}
Let P be a bounded Lipschitz polytope as above. Let $u_{\lambda}$ be the harmonic extension in $\widetilde{P}$ of the Dirichlet eigenfunction $\varphi_{\lambda}$ given by \eqref{eq.lifting}. There exists $r_{1}=r_{1}(P) > 0$ such that if $(x,t) \in \widetilde{P}$ and $ 0 < r <r_{1}, -1<t<1$ , then $ N_{u_{\lambda}}((x,t),r)\le C_{r}\sqrt{\lambda}$, where $C_{r}$ depends only on r and P. 
\end{lemma}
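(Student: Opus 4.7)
The plan is to combine the exponential lifting $u_{\lambda}(x,t) = e^{t\sqrt{\lambda}} \varphi_{\lambda}(x)$ with the propagation machinery of Sections~\ref{sec.DI}--\ref{sec.Unif}. After normalizing $\|\varphi_{\lambda}\|_{L^2(P)} = 1$, the product structure yields the explicit formula
\begin{equation*}
\int_{P \times (a,b)} u_{\lambda}^2 = \frac{e^{2b\sqrt{\lambda}} - e^{2a\sqrt{\lambda}}}{2\sqrt{\lambda}}.
\end{equation*}
The strategy has three pieces: (i) an upper bound on the $L^2$ mass of $u_{\lambda}$ on every small ball, obtained from the slab inclusion $B_R((x,t)) \cap \widetilde{P} \subset P \times (t-R, t+R)$; (ii) a pigeonhole argument on a finite star-shaped cover of $\widetilde{P} \cap \{|t| \le 1\}$ that locates a reference ball on which the $L^2$ mass is of the maximal exponential order $e^{2\sqrt{\lambda}}/\sqrt{\lambda}$, which directly bounds the doubling index at the reference by $C\sqrt{\lambda}$; (iii) propagation of this bound to an arbitrary center in $\widetilde{P} \cap \{|t| < 1\}$ via Lemma~\ref{lem.unifDI}.

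For step (i), the slab inclusion immediately gives $\int_{B_R((x,t)) \cap \widetilde{P}} u_{\lambda}^2 \le e^{2(t+R)\sqrt{\lambda}}/(2\sqrt{\lambda})$. For step (ii), I would build the cover by applying Proposition~\ref{prop.partition} to $P$ to obtain a finite family of spatial star-shaped balls, and then lift to $\widetilde{P}$ by pairing each spatial center with a finite grid of $t$-values in $[-1,1]$. The key observation is that a product ball $B((x_0, t_0), r) \cap \widetilde{P}$ is star-shaped with respect to $(x_0, t_0)$ exactly when the spatial slice $B(x_0, r) \cap P$ is star-shaped with respect to $x_0$. Augmenting with interior balls then produces a finite cover $\{B(p_i, \rho_0)\}_{i=1}^M$ of $\widetilde{P} \cap \{|t| \le 1\}$ with each $B(p_i, 2\rho_0) \cap \widetilde{P}$ star-shaped with respect to $p_i$, with $\rho_0 = \rho_0(P)$ and $M = M(P)$. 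Pigeonhole produces an index $i_{\lambda}$ with
\begin{equation*}
\int_{B(p_{i_{\lambda}}, \rho_0) \cap \widetilde{P}} u_{\lambda}^2 \ge \frac{c\, e^{2\sqrt{\lambda}}}{\sqrt{\lambda}},
\end{equation*}
and combining this with the upper bound at radius $2\rho_0$ yields $N_{u_{\lambda}}(p_{i_{\lambda}}, \rho_0) \le C\sqrt{\lambda}$.

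For step (iii), since the star-shaped cover is finite and depends only on $P$, any two of its centers can be linked by a chain of at most $M$ overlapping star-shaped balls from that same cover; within each link, Lemma~\ref{lem.unifDI} multiplies the current doubling-index bound by a fixed constant $C = C(P)$. After at most $M$ iterations one obtains $N_{u_{\lambda}}((x,t), r) \le C^M \sqrt{\lambda}$ for every $(x,t) \in \widetilde{P} \cap \{|t| < 1\}$ and every $r < r_1$, where $r_1 = r_1(P)$ is chosen small enough for Lemma~\ref{lem.unifDI} to apply at each link. The hard part will be that the reference center $p_{i_{\lambda}}$ depends on $\lambda$ in an uncontrolled way, so the propagation chain cannot be prescribed a priori; this is compensated by the fact that the star-shaped cover from Proposition~\ref{prop.partition} is one fixed finite collection independent of $\lambda$, so the chain length $M$ and the compound constant $C^M$ are uniform in $\lambda$.
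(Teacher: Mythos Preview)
The paper does not actually prove this lemma: it simply declares the statement ``standard'' and refers to \cite{LMNN21} and \cite{DF88,DF90}. Your outline --- slab upper bound, pigeonhole on a fixed finite cover to locate a reference center with $L^2$ mass of order $e^{2\sqrt\lambda}/\sqrt\lambda$, and then propagation to arbitrary centers --- is indeed the standard route in that literature, so at the level of strategy you are aligned with what the paper has in mind.

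There is, however, a genuine gap in your step (iii). Lemma~\ref{lem.unifDI} (and likewise Lemma~\ref{lem.PropagationOfDB}) is asymmetric: it bounds $N_u(x,r)$, with $r$ forced to be \emph{strictly smaller} than the star-shape radius $r_0$ (in fact $r<c_1\{R^*(x)\wedge \tfrac14 r_0\}$), in terms of $N_u(0,\tfrac12 r_0)$ at the \emph{larger} scale. If you try to iterate this along a chain of cover centers $p_{i_\lambda}=q_0,q_1,\dots,q_M=(x,t)$, after one step you control $N_u(q_1,r_1)$ only for $r_1\lesssim c_1\rho_0$, whereas the next application (with $q_1$ as the new center) requires $N_u(q_1,\rho_0)$ as input. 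Monotonicity of the doubling index goes the wrong way here. Worse, if you instead shrink the star-shape radius to $2r_1$ at the next step, the allowed step size $|q_2-q_1|$ shrinks with it, so the chain never crosses the domain. Thus ``Lemma~\ref{lem.unifDI} multiplies the current doubling-index bound by a fixed constant'' is not what the lemma delivers.

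The standard fix --- and what the references cited by the paper actually do --- is to propagate the \emph{$L^2$ mass lower bound}, not the doubling index. Normalize so that $\int_{\widetilde P\cap\{|t|\le 1+4\rho_0\}}u_\lambda^2=1$; the pigeonhole then gives $\int_{B_{\rho_0}(p_{i_\lambda})\cap\widetilde P}u_\lambda^2\ge \exp_2(-C\sqrt\lambda)$. At a fixed scale $r=\rho_0/2$, the three-ball inequality (exactly as in the proof of Lemma~\ref{lem.PropagationOfDB}) yields $\int_{B_r(q_{k})}\ge\big(\int_{B_r(q_{k-1})}\big)^2$ whenever $|q_k-q_{k-1}|\le r$ and $B_{4r}(q_k)\cap\widetilde P$ is star-shaped and lies in the normalization region. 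After at most $M'=M'(P)$ such steps you reach any point of the cover with $\int_{B_r(q_{M'})}\ge\exp_2(-2^{M'}C\sqrt\lambda)$, and then $N_{u_\lambda}(q_{M'},r)\le -\log_2\int_{B_r(q_{M'})}\le 2^{M'}C\sqrt\lambda$. This keeps the scale fixed throughout and gives exactly the conclusion you want; the packaged statements of Lemmas~\ref{lem.PropagationOfDB} and~\ref{lem.unifDI} are not the right tools for the chain.
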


The proof of the above lemma is standard. We refer to \cite{LMNN21} for a proof in general Lipschitz domains and to \cite{DF88,DF90} for smooth domains. The details are skipped.

Finally , we prove Theorem \ref{thm:1} .

\begin{proof}[Proof of Theorem \ref{thm:1}]
To estimate the nodal set of $\varphi_\lambda$ in $P$, it suffices to estimate the nodal set of $u_\lambda$ in $P\times (-\delta, \delta)$ for some small $\delta>0$ (independent of $\lambda$).

We need the following property: if $B(x,r) \cap P$ is star-shaped with respect to $x$, then $B((x,0),r) \cap \widetilde{P}$ is star-shaped with respect to $(x,0)$. Note that $y\in \partial P$ if and only if $(y,t) \in \partial \widetilde{P}$. 
Assume $B(x,r) \cap P$ is star-shaped with respect to $x$.
To show $B((x,0),r) \cap \widetilde{P}$ is star-shaped with respect to $(x,0)$, we consider a point $(y,s) \in B((x,0),r) \cap \partial \widetilde{P}$. Note that this implies $y\in B(x,r) \cap \partial P$. Since $B(x,r) \cap P$ is star-shaped with respect to $x$ and by the definition, for any $y\in B(x,r) \cap \partial P$, we have
\begin{equation}
    (y-x)\cdot n(y) \ge 0.
\end{equation}
Observe that the outer normal of $(y,s) \in \partial \widetilde{P}$ is $(n(y),0)$. Then it is easy to verify
\begin{equation}
    ((y,s) - (x,0)) \cdot (n(y), 0) = (y-x)\cdot n(y) \ge 0.
\end{equation}
Since $(y,s) \in B((x,0),r) \cap \partial \widetilde{P}$ is arbitrary, we see that $B((x,0),r) \cap \widetilde{P}$ is star-shaped with respect to $(x,0)$. The property is proved.

Recall the partition of $\partial P$ in Proposition \ref{prop.partition}, i.e., there exists $\{ x_{k,j} \} \subset \partial P$ such that
\begin{equation}
    \partial P \subset \bigcup_{k=0}^{n-1} \bigcup_{j=1}^{M_k} B(x_{k,j}, 2(\frac{1}{32}c^{*})^{k+1}r_0).
\end{equation}
Moreover, $B(x_{k,j}, (\frac{1}{32} c^*)^k c^* r_0) \cap P$ is star-shaped. Thus by the previous property, the extended subdomains $B((x_{k,j}, 0), (\frac{1}{32} c^*)^k c^* r_0) \cap \widetilde{P}$ are star-shaped with respect to $(x_{k,j},0)$. In addition, for some $\delta_1>0$, we have
\begin{equation}\label{eq.tP.partition}
    B(\partial P \times \{ 0\}, \delta_1 ) \cap \widetilde{P} \subset \bigcup_{k=0}^{n-1} \bigcup_{j=1}^{M_k} B((x_{k,j},0), 4(\frac{1}{32}c^{*})^{k+1}r_0) \cap \widetilde{P}.
\end{equation}
Lemma \ref{lem.DLintP} indicates that
\begin{equation}\label{est.DLxkj}
    N_{u_\lambda}((x_{k,j},0), \frac12 (\frac{1}{32} c^*)^k c^* r_0) \le C \sqrt{\lambda}.
\end{equation}
By Theorem \ref{thm.local} (with $r_0$ replaced by $32(\frac{1}{32}c^{*})^{k+1}r_0 = (\frac{1}{32}c^{*})^{k} c_* r_0 $), we have
\begin{equation}
\begin{aligned}
    & \mathbb{H}^{n}(B((x_{k,j},0), 4(\frac{1}{32}c^{*})^{k+1}r_0) \cap \widetilde{P} \cap Z(u_\lambda)) \\
    & \le C(N_{u_\lambda}((x_{k,j},0), 16(\frac{1}{32}c^{*})^{k+1}r_0 ) + 1) r_0^{n} \\
    & \le C\sqrt{\lambda} r_0^{n},
\end{aligned}
\end{equation}
where we also used \eqref{est.DLxkj} in the last inequality.
By \eqref{eq.tP.partition} and the fact $M_k$ is bounded by a constant depending on $P$ and $r_0$,
\begin{equation}
    \mathbb{H}^{n}(B(\partial P \times \{ 0\}, \delta_1 ) \cap \widetilde{P} \cap Z(u_\lambda)) \le C\sqrt{\lambda}.
\end{equation}
Let $P_\delta = B(\partial P, \delta) \cap P$.
By a simple geometric observation, there exists $\delta>0$ such that 
\begin{equation}
    P_\delta \times (-\delta,\delta) \subset B(\partial P \times \{ 0\}, \delta_1 ) \cap \widetilde{P}.
\end{equation}
Therefore,
\begin{equation}
    \mathbb{H}^{n}(P_\delta \times (-\delta,\delta) \cap Z(u_\lambda)) \le C\sqrt{\lambda}.
\end{equation}

On the other hand, since $(P\setminus P_\delta) \times (-\delta, \delta)$ is an interior region away from the boundary $\partial \widetilde{P}$, by the interior estimate of the nodal set in Lemma \ref{lem.flatbdary} and Lemma \ref{lem.DLintP} (combined with a partition of interior balls),
\begin{equation}
    \mathbb{H}^n((P\setminus P_\delta) \times (-\delta, \delta) \cap Z(u_\lambda)) \le C\sqrt{\lambda}.
\end{equation}
It follows that
\begin{equation}
    \mathbb{H}^n(P \times (-\delta, \delta) \cap Z(u_\lambda)) \le C\sqrt{\lambda}.
\end{equation}
Consequently, due to \eqref{eq.lifting},
\begin{equation}
    \mathbb{H}^{n-1}(P \cap Z(\varphi_\lambda)) = (2\delta)^{-1} \mathbb{H}^n(P \times (-\delta, \delta) \cap Z(u_\lambda)) \le C\sqrt{\lambda}.
\end{equation}
This proves the main theorem.
\end{proof}

\appendix
\section{{Outline of the proof of Lemma \ref{lem.monotonicityDL}}}

\begin{proof}
Without loss of generality, assume $x = 0\in\overline{\Omega}$.
Consider a harmonic function $u$ defined in $B_{2R}(0) \cap \Omega$ satisfying the Dirichlet boundary condition $u = 0$ on $B_{2R}(0) \cap \partial \Omega$. 

\textbf{Step 1}: Monotonicity of Almgren's frequency function in star-shaped domains.
For $r\in (0,2R)$, define
\begin{equation*}\begin{aligned}
  &H(r)=\int_{\partial B_{r}\left(0\right)\cap\Omega} u^{2} d\sigma,\\
  &D(r)=\int_{B_{r}(0)} |\nabla u|^{2},
\end{aligned}\end{equation*}
where $d\sigma$ denotes the surface measure. The Almgren's frequency function of $u$ centered at $0$ is given by
\begin{equation}\label{def.frequency}
\beta_{u}(r) = \beta_u(0,r) =\frac{rD(r)}{H(r)}.
\end{equation}
It can be shown that $\beta_u(r)$ is nondecreasing in $r\in (0,2R)$, provided that $B_{2R}(0)\cap \Omega$ is star-shaped with respect to the center $0$. A rigorous proof can be found in \cite{KN98}.

\textbf{Step 2}: Monotonicity of the doubling index.
The relation between $H(r)$ and the frequency function is given by
\begin{equation}\label{eq.H}
\frac{H'(r)}{H(r)}-\frac{n-1}{r}=\frac{2}{r}\beta_{u}(r).
\end{equation}
Integrating (\ref{eq.H}) over $r\in(r_{1},r_{2}) \subset (0,2R)$, we obtain
\begin{equation}\label{eq.Hr2/Hr1}
 \log \frac{H(r_{2})}{H(r_{1})}-(n-1)\log\frac{r_{2}}{r_{1}}=2\int_{r_{1}}^{r_{2}}\frac{\beta_{u}(r)}{r}dr \end{equation}
Taking $r_1 = t$ and $r_2 = 2t$ in the above identity and using the monotonicity of the frequency function in Step 1, we derive from \eqref{eq.Hr2/Hr1} that $H(2t)/H(t)$ is nondecreasing in $t\in (0,R)$. Therefore, for any $0<\tau<t<R$,
\begin{equation}
    \frac{H(2\tau)}{H(\tau)} \le \frac{H(2t)}{H(t)}.
\end{equation}
Equivalently,
\begin{equation}\label{est.4sphere}
    H(2\tau) H(t) \le H(\tau) H(2t).
\end{equation}
Fix $s$ and $r$ such that $0<s<r<R$ and let $\tau \in (0,s)$ and $t\in (s,r)$. Integrating \eqref{est.4sphere} in $\tau \in (0,s)$ and in $t\in (s,r)$, we have
\begin{equation}
    \int_{B_{2s} \cap \Omega} u^2 \int_{B_r\setminus B_s  \cap \Omega} u^2 \le \int_{B_s  \cap \Omega} u^2 \int_{B_{2r} \setminus B_{2s}  \cap \Omega} u^2.
\end{equation}
Adding $\int_{B_{2s}  \cap \Omega} u^2 \int_{B_s  \cap \Omega} u^2$ on both sides, we get
\begin{equation}
    \int_{B_{2s}  \cap \Omega} u^2 \int_{B_r  \cap \Omega} u^2 \le \int_{B_s  \cap \Omega} u^2 \int_{B_{2r}  \cap \Omega} u^2.
\end{equation}
This implies \eqref{est.monotonicityDL} as desired.
\end{proof}

\bibliographystyle{abbrv}
\bibliography{Myref}
\end{document}